\documentclass[10pt,twoside]{article}
\usepackage{amsmath,amssymb,amsthm}
\usepackage{graphicx}
\usepackage{amsmath, amsthm, amsfonts, amssymb, multicol, pstricks}
\usepackage{color}

\setlength{\topmargin}{-0.1 in}
\setlength{\textwidth}{16cm} \setlength{\textheight}{22cm}
\setlength{\oddsidemargin}{-0.2cm}
\setlength{\evensidemargin}{-0.2cm}

\newtheorem{theorem}{Theorem}[section]
\newtheorem{corollary}[theorem]{Corollary}
\newtheorem{lemma}[theorem]{Lemma}
\newtheorem{prop}[theorem]{Proposition}
\theoremstyle{definition}
\newtheorem{definition}[theorem]{Definition}
\newtheorem{example}[theorem]{Example}
\newtheorem{remark}[theorem]{Remark}
\numberwithin{equation}{section}

\newcommand{\al}{\alpha}

\newcommand{\om}{\omega}

\newcommand{\ve}{\varepsilon}

\newcommand{\vp}{\varphi}
\newcommand{\real}{\mathbb{R}}
\newcommand{\rn}{\real^{n}}

\numberwithin{equation}{section}

\allowdisplaybreaks
\usepackage{color}
\begin{document}

\title{Hybrid measure differential equations: existence of solutions and continuous dependence on parameters.}

\author{Claudio A. Gallegos\thanks{ Universidad de Chile, UCH, Departamento de Matem\'{a}ticas, Casilla 653, Santiago, CHILE. E-mail: {\tt claudio.gallegos.castro@gmail.com}}, {Hern\'an R.} Henr\'{i}quez\thanks{Universidad de Santiago de Chile, USACH, Departamento de Matem\' atica, Casilla 307, Correo 2, Santiago, CHILE. E-mail: {\tt hernan.henriquez@usach.cl}},
		and Jaqueline G. Mesquita\thanks{ Universidade de Bras\'{i}lia, Departamento de Matem\'{a}tica, Campus Universit\'{a}rio Darcy Ribeiro, Asa Norte 70910-900, Bras\'{i}lia-DF, BRAZIL. E-mail: {\tt jgmesquita@unb.br}}
}

\date{}
\maketitle

\begin{abstract}
This paper is devoted to study the qualitative properties of  hybrid measure differential
 equations  (HMDEs, for short).
We establish several results on the existence of global solutions, including the existence of regulated,
continuous, differentiable and S--asymptotically $\om$--periodic solutions. Furthermore, we present a result on continuous dependence
of solutions in terms of parameters. Our results are based on  extensions of Kasnoselskii's
fixed-point theorem.

\smallskip
\noindent{\bf Keywords:} Measure differential equations; Hybrid differential equations; Kurzweil--Stieltjes
integral; {existence of solutions; S--asymptotically $\omega$--periodic functions; continuous dependence on parameters}.

\noindent \textbf{MSC 2020  subject classification:}  Primary: 34A12; 34A06. Secondary: 34A38; 34C25.
\end{abstract}

\pagestyle{myheadings} \markboth{\hfil C. A. Gallegos,  H. R. Henr\'{\i}quez and
J. G. Mesquita \hfil $\hspace{3cm}$ } {\hfil$\hspace{1.5cm}$
{Hybrid measure differential equations}
\hfil}

\section{Introduction}
This work is dedicated to the study of qualitative properties of the hybrid measure differential equation (abbreviated, HMDE) {given by:}
\begin{eqnarray}
x(t) &=& x_{0} - h(t_{0}, x(t_0)) + h(t,x(t)) + \int_{t_0}^{t}f(s,x(s)){\rm d}g(s), \; \; t\in J,  \label{hmde1} \\
x(t_0)& = & x_0\in\rn, \label{hmde2}
\end{eqnarray}
where $J=[t_{0},t_{0}+a]$ with $t_{0} \in \real$ and  $a>0$. In this problem, $f, h : J \times \rn \to \rn$ and
$g:J\to\real$ denote functions whose properties will be described later, and the integral on the right hand side is considered in the sense of Kurzweil--Stieltjes.

Specifically, our main concern in this work is the study of existence of solutions for problem HMDE \eqref{hmde1}--\eqref{hmde2}. We {analyze} this equation by imposing certain technical conditions over the function $f$, most of which are related to the concept of integration in the sense of Kurzweil-Stieltjes, see conditions (A1)-(A2), and (A2*) in Section~3. Further, we consider the function $h$ satisfying a nonlinear contraction assump\-tion, see conditions (A3) and (A3*) in Section~3. The above mentioned conditions allow us to guarantee the existence of solutions for problem HMDE  \eqref{hmde1}--\eqref{hmde2}, see Theorems~\ref{exist1} and \ref{T1}. Another goal in this paper is to establish special classes of solutions for problem HMDE \eqref{hmde1}--\eqref{hmde2}, including continuous and differentiable solutions, see Propositions~\ref{C2} and \ref{C3} in Section~4. Additionally, we study solutions defined on unbounded intervals in order to  es\-ta\-blish S--Asymptotically $\om$-periodic solutions for problem HMDE \eqref{hmde1}--\eqref{hmde2}, see Theorems~\ref{thm3.2} and \ref{thm3.3}. We emphasize that this concept of periodicity have yet to be considered in the framework of measure differential equations. Besides of the aforementioned existence results, we establish a theorem of continuous dependence on parameters for problem HMDE \eqref{hmde1}--\eqref{hmde2}, see Theorem~5.1 in Section 5.

Some of the earliest contributors in the theory of measure differential equations were W. W. Schmae\-deke, R. R. Sharma  and P. C. Das (see \cite{DS,Sch,Sharma}). Over the years, several authors have been developed and motivated the qualitative theory of these equations, principally focused on the existence of solutions, stability theory, and applications to other types of systems exhibiting discontinuous behaviour, see for instance \cite{CS,DMS1,DMS2,FGMT1,M-J-A,GHM,LM,MS,satco}. One of the interests in studying this type of equations relies on its formulation, which can include two types of problems widely studied in the specialized literature nowadays, such as is the case of dynamic equations on time scales and impulsive equations. 

In relation to dynamic equations, the notion of time scales has been intensively developed in dynamical systems and related topics. Some of the reasons are the capacity of this theory  to unify the discrete and continuous analysis, include some other developments such as quantum analysis, and its applications to the modeling of strongly nonlinear systems. We refer to \cite{ABL,Bohner1, Bohner2,HM}. Recently, an interesting theoretical point of view it was considered in the time scales setting.
In \cite{AS}, A. Slav\'ik gave a first connection {between} measure differential equations and dynamic equations on time scales. Henceforth, this innovating approach has been useful to develop the theory of dynamic equations on time scales from a general perspective, allowing to consider more general conditions than the usual ones. We refer to
\cite{FGMT1,M-J-A,M-J-A2,FMS,GGM,GHM} for recent advances.

Let us consider a time scale $\mathbb{T}$, {\it i.e.}, a nonempty closed subset of $\mathbb{R}$, and let $\Delta$ be the operator delta derivative.
The analogue of the problem HMDE \eqref{hmde1}-\eqref{hmde2}  in the framework of time scales can be formulated by
\begin{equation}\label{DETS}
[x(t)-h(t,x(t))]^{\Delta}=f(t,x(t)),\; \; t\in J\cap\mathbb{T},
\end{equation}
{for which, under appropriate assumptions (see \cite{ZSLB}), its integral form is given by
\begin{equation}\label{intDETS}
x(t)=x_0-h(t_0,x(t_0))+h(t,x(t))+\int_{t_0}^{t}f(t,x(t))\Delta s,\; \; t\in J\cap\mathbb{T}.
\end{equation}}

As previously mentioned, due to the development carried out in \cite{AS}, we can insert the equation \eqref{intDETS} in the context of
measure differential equations of type \eqref{hmde1}.
In consequence, the qualitative theory developed in this paper  can be applied to  dynamic equations of type  \eqref{intDETS}.
For instance, Theorems~\ref{exist1} and \ref{T1} improve the existence results founded in \cite{ZSLB}, and also it is possible to provide
new existence results of S-asymptotically $\om$-periodic solutions for \eqref{intDETS}.

Furthermore, it is also possible to include as a special case of measure differential equations the so-called
\emph{impulsive differential equations}. Indeed, it is a known fact that measure differential equations of type \eqref{hmde1} encompass
the following hybrid impulsive system
\begin{align}
[x(t) - h(t, x(t))]^{\prime} &= f(t, x(t)), \ \ t \neq \tau_{j}, \label{impulsive-1}  \\
\Delta^+ x(\tau_j) &= I_j (x(\tau_j)), \quad j \in \{1, \ldots, m\}, \label{impulsive-2}
\end{align}
where $f,h \colon J \times\mathbb R^{n} \to \mathbb R^{n}$, $I_{j} \colon \mathbb R^n \to \mathbb R^n$ for each $j\in \{1, \ldots, m\}$,
$\{\tau_j\}_{j\in \{1, \ldots, m\}}$ is an increasing real sequence, and $ \Delta^{+} x(\tau_j)=x(\tau_j^+) - x(\tau_j)$.

In the classical theory of ordinary differential equations, the equation \eqref{impulsive-1} is called hybrid differential equation with linear perturbation of second type. We refer the reader to
\cite{DHAGE2,LSYT,ZSLB} and references therein for details on this topic.

Under appropriate assumptions, we can rewrite \eqref{impulsive-1}-\eqref{impulsive-2} in the following integral form
\begin{equation}\label{impulsive-integral-form}
x(t) =x(t_0)+ h(t, x(t)) - h(t_0, x(t_0)) +  \int_{t_0}^{t} f(s, x(s))\,{\rm d}s + \!\!\! \sum_{\substack{j \in \{1, \ldots, m\}, \\
t_{0} \leqslant \tau_{j} <t}}I_{j} (x(\tau_j)),
\end{equation}
where the integral on the right--hand side is considered in the Kurzweil sense. Note that the equation \eqref{impulsive-integral-form}
is a special case of the measure differential equation \eqref{hmde1}. For more details about this aspect, we mention \cite{M-J-A,MST}.
Moreover, using the correspondence found in \cite{M-J-A}, it is possible to regard measure differential equations of type \eqref{hmde1}
to study the hybrid impulsive dynamic equation on time scales given by
\begin{equation}\label{impulsive-integral-form-1}
x(t) =x(t_0)+ h(t, x(t)) - h(t_0, x(t_0)) +  \int_{t_0}^{t} f(s, x(s)) \Delta s + \!\!\! \sum_{\substack{j \in \{1, \ldots, m\}, \\
t_{0} \leqslant \tau_{j} < t}}I_{j} (x(\tau_j)),
\end{equation}
where the integral on the right hand side of \eqref{impulsive-integral-form-1} represents the $\Delta$-integral on the time scale {in the sense of Kurzweil}.
Consequently, the qualitative theory
obtained in this paper  can be also translated to these hybrid impulsive cases. The equations given by \eqref{impulsive-integral-form} and \eqref{impulsive-integral-form-1}  have many important applications, since they can describe models which have abrupt state changes.
This type of behavior can be found in many different situations and phenomena such as bursting rhythm models in medicine, frequency
modulated systems, ingestion of medicine, among others. We refer to  \cite{BHNO,BHNO2,Braverman-2,Braverman,Li-Bohner} and the references therein.

This paper is organized in five sections. In Section 2, a brief summary of the Kurzweil--Stieltjes integral and properties
that we will use later are included. In Section 3, we provide existence results for problem HMDE \eqref{hmde1}--\eqref{hmde2}. In Section 4, we investigate the existence of solutions with special properties as continuity, differentiability and the existence of
S--asymptotically $\omega$--periodic solutions. In Section 5, we provide a continuous dependence on parameters result for problem HMDE \eqref{hmde1}-\eqref{hmde2}.

\section{Preliminaries}
In this section, we present some basic concepts of Kurzweil--Stieltjes integration theory. For more details, the reader can see
\cite{MST,SCHWABIK1}. {Throughout this text, $X$ will always denote a Banach space endowed with a norm $\|\cdot\|$ and $[a,b]\subset\real$ is a compact interval}. In particular, we consider $\mathbb{R}^{n}$ endowed with a norm $\|\cdot\|$.

A function $\delta \colon [a,b]\to\mathbb{R}^{+}$ is called a {\itshape gauge} on $[a,b]$. If $\delta$ is a gauge on $[a, b]$, a
 \emph{tagged partition} of the interval $[a,b]$ with subdivision points
$a=s_0\leqslant s_1\leqslant \cdots \leqslant s_k=b$,  and {\itshape tags} $\tau_i\in [s_{i-1},s_i]$, $i=1,...,k$, is called  $\delta$-{\itshape fine} if
\[
[s_{i-1},s_i] \subset \left(\tau_i-\delta(\tau_i), \tau_i+\delta(\tau_i)\right), \; \; i=1, \ldots, k.
\]
It is important to mention that given a gauge $\delta \colon [a, b] \to\mathbb{R}^{+}$, it is always possible to obtain a $\delta$-fine tagged partition of the interval $[a, b]$, see \cite[Cousin's lemma~6.1.3]{MST}.

\begin{definition}\label{KS-int} {(\cite[Definition 6.1.2]{MST})}
We say that a function $f \colon [a, b] \to \mathbb R^n$ is {\em Kurzweil--Stieltjes integrable} on $[a, b]$ with respect to a function
$g\colon [a, b] \to \mathbb R$ if there is a vector $\mathcal{I}\in\mathbb R^n$ such that for every $\varepsilon>0,$ there exists a gauge $\delta \colon [a, b]\rightarrow \mathbb{R}^{+}$  such that
\[
\left\|\sum\limits_{i=1}^{k}f(\tau_i)\left(g(s_i)-g(s_{i-1})\right) - \mathcal{I}\right\|<\varepsilon
\]
for all $\delta$--fine tagged partition of $[a, b].$ In this case, $\mathcal{I}$ is called the {\em Kurzweil--Stieltjes integral of $f$ with respect to $g$ over $[a, b]$} and it will be denoted by $\int^{b}_{a} f(s)  {\rm d} g(s),$ or just by $\int^{b}_{a} f {\rm d} g$.
\end{definition}

As it should be expected, the Kurzweil--Stieltjes integral satisfies the usual properties of linearity, integrability on subintervals, additivity with respect to adjacent intervals,  among others  usual properties in integration theory (see \cite{MST}).

When $g$ is the identity function, R. Henstock in 1961 (see \cite{H1}) introduced another concept of integral for real valued functions
which is equivalent to the integral in the sense of Kurzweil. In the technical literature, due to its equivalence with the integral
of Kurzweil, this integral is known as \emph{Henstock--Kurzweil integral} or \emph{gauge integral} (see \cite{KS1}).
The relationship between different integration concepts can be summarized in the following chain of inclusions
\[
\mathcal{R}([a,b],\mathbb{R})\subset\mathcal{L}_1([a,b],\mathbb{R})\subset H([a,b],\mathbb{R})=\mathcal{K}([a,b],\mathbb{R}),
\]
where $\mathcal{R}([a,b],\mathbb{R})$ denotes the space of Riemann integrable functions, $\mathcal{L}_1([a,b],\mathbb{R})$ stands for
the space of Lebesgue integrable functions, $H([a,b],\mathbb{R})$ denotes the space of Henstock integrable functions,
and $\mathcal{K}([a,b],\mathbb{R})$ denotes the space of Kurzweil integrable functions.

To complete these general comments on the Kurz\-weil-\-Stielt\-jes integral,
we point out that the Kurzweil--Stieltjes definition allows us to integrate a broad class of functions.
For instance, as it was explained previously, if $g(t)\equiv t$, then we obtain the Henstock--Kurzweil integral,
allowing us to integrate highly oscillating functions.
On the other hand, the Kurzweil--Stieltjes  integral allows us to integrate a function $f$ with respect to $g$ even when
both of them have points of discontinuity in common, which we know that is not possible for the integral in the Riemann--Stieltjes  sense.
Therefore, for all these reasons, it seems most convenient to work with the Kurzweil--Stieltjes integral when continuity of both
functions $f$ and $g$ is not required. The relation among other types of Stieltjes integrals and the
Kurzweil--Stieltjes can be found in \cite{MST}.

In the rest of this section, we develop in detail some fundamental technical aspects to justify the results presented in
the following sections. Initially, we recall the concept of regulated function  which plays an important role in this work.

\begin{definition}
A function $f \colon [a,b]\to X$ is called {\itshape regulated} if the limits below exist
\[
\lim_{s\to t^-}f(s)=f(t^{-}) \ \ \textrm{for} \ \ t\in(a,b] \ \ \textrm{and} \ \ \lim_{s\to t^{+}}f(s)=f(t^{+}) \ \ \textrm{for} \ \ t\in[a,b).
\]
\end{definition}
The space consisting of all regulated functions $f \colon [a,b]\to X$ will be denoted by $G([a,b],X)$, and it is a Banach space
under the usual norm of the uniform convergence $\|f\|_{\infty}=\displaystyle\sup_{a \leqslant t \leqslant b} \|f(t)\|$.

In addition, we recall that the vector space consisting of all functions $f\colon [a,b]\to X$ of bounded variation on
$[a,b]$, denoted by $BV([a,b],X)$, is a Banach space endowed with the norm
\[
\|f\|_{BV}= \|f(a)\|+ \mathrm{var}_{a}^{b}(f), \; f \in BV([a,b],X),
\]
where $\mathrm{var}_{a}^{b}(f)$ denotes the variation of the function $f$ on $[a,b]$. In the case $X=\real$, we
abbreviate the notation by writing $BV([a,b])$ instead of $BV([a,b],\real)$. It is well known  that $BV([a,b],X)\subset G([a,b],X)$.

The next result yields  sufficient conditions to ensure the existence of the Kurzweil--Stieltjes integral.
\begin{theorem} \label{thm2.1} {\rm (\cite[Corollary~1.34]{SCHWABIK1})}
Let $f \colon [a,b]\to \mathbb{R}^{n}$ be a regulated function and {let} $g \colon [a,b]\to\mathbb{R}$ be a bounded variation function.
Then the integral $\int_{a}^{b} f {\rm d} g$ exists, and
\[
\left\|\int_{a}^{b}f(s)  {\rm d} g(s) \right\| \leqslant \|f\|_{\infty}\mathrm{var}_{a}^{b}(g).
\]
\end{theorem}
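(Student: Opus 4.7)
My plan is the classical two-step strategy: first prove the norm bound for any tagged Riemann--Stieltjes-type sum, and then obtain existence by uniformly approximating $f$ by step functions and passing the estimate to the limit. Concretely, for any tagged partition $D$ with division points $a=s_{0}\leqslant\cdots\leqslant s_{k}=b$ and tags $\tau_{i}\in[s_{i-1},s_{i}]$, the sum $S(f,g,D):=\sum_{i=1}^{k}f(\tau_{i})(g(s_{i})-g(s_{i-1}))$ satisfies
\[
\|S(f,g,D)\|\leqslant \sum_{i=1}^{k}\|f(\tau_{i})\|\,|g(s_{i})-g(s_{i-1})|\leqslant \|f\|_{\infty}\,\mathrm{var}_{a}^{b}(g),
\]
a bound independent of the partition and of any gauge. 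Once integrability is proved, choosing $\delta$-fine partitions whose sums converge to $\int_{a}^{b}f\,{\rm d}g$ yields the norm estimate in the statement.

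For existence, I would first check that every step function $\varphi:[a,b]\to\mathbb{R}^{n}$ is Kurzweil--Stieltjes integrable with respect to $g$. Given the finitely many breakpoints $a=u_{0}<u_{1}<\cdots<u_{m}=b$ on which $\varphi$ is piecewise constant, I would construct a gauge forcing tags exactly at the $u_{j}$ and being small enough nearby to control, via the variation of $g$ in a neighborhood of each $u_{j}$, the contribution of intervals straddling a jump; a direct verification from Definition~\ref{KS-int} then yields the value of $\int_{a}^{b}\varphi\,{\rm d}g$ as an explicit finite expression, and by linearity all differences of step functions are also integrable.

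Next, I would invoke the standard characterization of regulated functions as uniform limits of step functions to pick $(\varphi_{n})$ with $\|\varphi_{n}-f\|_{\infty}\to 0$. Applying the sum estimate of step one to $\varphi_{n}-\varphi_{m}$ shows that $\bigl(\int_{a}^{b}\varphi_{n}\,{\rm d}g\bigr)$ is Cauchy in $\mathbb{R}^{n}$ with some limit $\mathcal{I}$. Given $\varepsilon>0$, choose $n$ so that $\|f-\varphi_{n}\|_{\infty}\mathrm{var}_{a}^{b}(g)<\varepsilon/3$ and $\|\int_{a}^{b}\varphi_{n}\,{\rm d}g-\mathcal{I}\|<\varepsilon/3$, then a gauge $\delta$ witnessing integrability of $\varphi_{n}$ within $\varepsilon/3$; the three-term triangle inequality
\[
\|S(f,g,D)-\mathcal{I}\|\leqslant \|S(f-\varphi_{n},g,D)\|+\left\|S(\varphi_{n},g,D)-\int_{a}^{b}\varphi_{n}\,{\rm d}g\right\|+\left\|\int_{a}^{b}\varphi_{n}\,{\rm d}g-\mathcal{I}\right\|,
\]
whose first term is bounded uniformly in $D$ by the a priori sum estimate, gives $\|S(f,g,D)-\mathcal{I}\|<\varepsilon$ for every $\delta$-fine $D$, so $\int_{a}^{b}f\,{\rm d}g=\mathcal{I}$. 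The main technical delicacy is the gauge construction for step functions: once it is in place, the uniform-approximation-plus-Cauchy passage is entirely routine, and the norm estimate follows at once by taking sums through the defining limit.
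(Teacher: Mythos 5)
The paper offers no proof of this statement---it is quoted directly from \cite[Corollary~1.34]{SCHWABIK1}---and your argument is precisely the standard proof of that result: the partition-independent sum bound $\|S(f,g,D)\|\leqslant\|f\|_{\infty}\mathrm{var}_{a}^{b}(g)$, integrability of step functions via an explicit gauge (which uses the one-sided limits of $g$, available because bounded variation implies regulated), uniform approximation of regulated functions by step functions, and the three-epsilon passage to the limit. Your proposal is correct and takes essentially the same route as the cited source.
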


The following results establish important properties of the Kurzweil--Stieltjes integral.
\begin{theorem}  \label{thm1.2} {\rm (\cite[Corollary 6.5.4]{MST})}
Let $f \colon [a,b]\to\mathbb{R}^n$ and $g \colon [a,b]\to\mathbb{R}$ be a pair of functions such that $g$ is regulated, and the
integral $\int_{a}^{b} f  {\rm d} g$ exists. Then the function  $p \colon [a,b] \to \mathbb{R}^{n}$ given by
\[
p(t)=\int_{a}^{t} f(s) {\rm d} g(s), \;\; t \in [a,b],
\]
is regulated, and satisfies
\begin{align*}
p(t^+)&=p(t)+f(t)\Delta^{+}g(t), \; \; t\in [a,b), \\
p(t^-)&=p(t)-f(t)\Delta^{-}g(t), \; \; t\in (a,b],
\end{align*}
where $\Delta^{+}g(t)=g(t^+)-g(t)$ and $\Delta^{-}g(t)=g(t)-g(t^-)$.
\end{theorem}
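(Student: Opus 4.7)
The plan is to prove both one-sided limit formulas directly from the gauge definition, and thereby conclude that $p$ is regulated. I sketch the right-limit formula; the left-limit case is entirely analogous. Fix $t_0\in[a,b)$. By additivity of the Kurzweil--Stieltjes integral on adjacent subintervals, for every $t\in(t_0,b]$ one has $p(t)-p(t_0)=\int_{t_0}^{t}f(s)\,{\rm d}g(s)$, so the target limit reduces to showing that $\int_{t_0}^{t}f\,{\rm d}g - f(t_0)\Delta^{+}g(t_0)\to 0$ as $t\to t_0^{+}$. The natural decomposition is
\[
\int_{t_0}^{t}f\,{\rm d}g - f(t_0)\Delta^{+}g(t_0)=\Bigl[\int_{t_0}^{t}f\,{\rm d}g - f(t_0)\bigl(g(t)-g(t_0)\bigr)\Bigr]+f(t_0)\bigl(g(t)-g(t_0^{+})\bigr),
\]
in which the second summand tends to $0$ as $t\to t_0^{+}$ simply because $g$ is regulated. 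It therefore suffices to control the first bracket.

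For this, I invoke the Saks--Henstock lemma for the Kurzweil--Stieltjes integral (available in \cite{MST}). Given $\varepsilon>0$, the existence of $\int_{a}^{b}f\,{\rm d}g$ together with Saks--Henstock supplies a gauge $\delta$ on $[a,b]$ such that, for every finite family of non-overlapping $\delta$-fine tagged subintervals of $[a,b]$, the corresponding partial Riemann--Stieltjes sum approximates the integral over the union of those subintervals within $\varepsilon$. Applied to the one-element family consisting of the single tagged interval $([t_0,t],t_0)$, which is $\delta$-fine as soon as $t-t_0<\delta(t_0)$, this yields
\[
\Bigl\|\int_{t_0}^{t}f\,{\rm d}g - f(t_0)\bigl(g(t)-g(t_0)\bigr)\Bigr\|<\varepsilon.
\]
Combining this with the bound on the second bracket, one obtains $\|p(t)-p(t_0)-f(t_0)\Delta^{+}g(t_0)\|\leq \varepsilon+\|f(t_0)\|\,|g(t)-g(t_0^{+})|$ for all $t$ in a suitable right-neighbourhood of $t_0$, giving the desired formula for $p(t_0^{+})$.

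The left-sided identity is obtained by the same mechanism applied to the tagged interval $([s,t_0],t_0)$ for $s<t_0$ sufficiently close to $t_0$, using $g(s)\to g(t_0^{-})$. Since both one-sided limits then exist at every interior point (and the appropriate one-sided limit at each endpoint), $p$ belongs to $G([a,b],\mathbb{R}^{n})$. The main obstacle is the Saks--Henstock step: the raw gauge definition of the integral controls only full tagged partitions of $[a,b]$, whereas here I need an estimate for a \emph{partial} tagged system consisting of a single subinterval $[t_0,t]$ so that the contribution of $f$ near $t_0$ can be isolated. Once this standard upgrade is in place, the remaining work is a straightforward triangle-inequality estimate combined with the regulatedness of $g$.
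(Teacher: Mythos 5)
Your proof is correct. Note that the paper itself supplies no argument for this statement: it is quoted verbatim from the reference as \cite[Corollary~6.5.4]{MST}, so there is no in-paper proof to compare against. Your route --- reduce to $\int_{t_0}^{t}f\,{\rm d}g$ by additivity, split off $f(t_0)\bigl(g(t)-g(t_0)\bigr)$ via the Saks--Henstock lemma applied to the single $\delta$-fine tagged interval $([t_0,t],t_0)$, and let the remainder $f(t_0)\bigl(g(t)-g(t_0^{+})\bigr)$ vanish by regulatedness of $g$ --- is precisely the standard proof given in that source, and you correctly identify Saks--Henstock as the one nontrivial ingredient needed to pass from full tagged partitions to a single partial tagged system.
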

\begin{remark}
In the case that $g\colon [a,b]\to\real$ is a regulated function, the Kurzweil--Stieltjes integral $\int_{a}^{b}f{\rm d}g$ exists
for $f\in BV([a,b],\rn)$, see \cite[Theorem~6.3.11]{MST}.
\end{remark}

The next definition will allow us to appropriately manage sets of regulated functions.
\begin{definition} \label{D3} (\cite{FRANKOVA})
A set $\mathcal{F}\subset G([a,b],\rn)$ is said to be {\it equiregulated} if for every $\varepsilon>0$
the following conditions hold:
\begin{itemize}
\item [(i)] For every $\tau_{0} \in (a,b]$, there exists $\delta>0$  such that
 $\|x(t)-x(\tau_{0}^{-})\|<\varepsilon$ for all  $\tau_{0} - \delta < t <\tau_{0}$ and all $x\in\mathcal{F}$.
\item [(ii)] For every $\tau_{0} \in [a,b)$, there exists $\delta>0$  such that
$\|x(s) - x(\tau_{0}^{+})\|<\varepsilon$ for all  $\tau_{0} < s <\tau_{0} + \delta$ and all $x\in\mathcal{F}$.
\end{itemize}
\end{definition}

Next we present a type of Arzel\`a--Ascoli Theorem to characterize the compact sets in the space of regulated functions.

\begin{lemma} \label{L1} {\rm (\cite[Corollary~2.4]{FRANKOVA})}
A set $\mathcal{F} \subset G([a,b],\rn)$ is relatively compact if and only if
for every $t \in [a,b]$ the set $\{x(t): x\in \mathcal{F} \}$ is bounded in $\rn$
and $\mathcal{F}$  is equiregulated.
\end{lemma}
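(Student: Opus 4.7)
The plan is to prove both directions using the fact that $G([a,b],\mathbb{R}^n)$ with the uniform norm is a Banach space, so that relative compactness is equivalent to total boundedness (combined with completeness).

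For necessity, suppose $\mathcal{F}$ is relatively compact. Pointwise boundedness of $\{x(t):x\in\mathcal{F}\}$ follows immediately from the continuity of the evaluation map $x\mapsto x(t)$, which is $1$-Lipschitz from $(G([a,b],\mathbb{R}^n),\|\cdot\|_\infty)$ to $\mathbb{R}^n$. To obtain equiregulation, fix $\varepsilon>0$ and choose a finite $\varepsilon/3$-net $\{x_1,\ldots,x_m\}\subset\mathcal{F}$. Given $\tau_0\in(a,b]$, each $x_i$ is regulated, so there exists $\delta>0$ such that $\|x_i(t)-x_i(\tau_0^-)\|<\varepsilon/3$ for every $\tau_0-\delta<t<\tau_0$ and every $i=1,\ldots,m$. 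Using that the left-limit operation is $1$-Lipschitz with respect to the uniform norm, a triangle-inequality argument then yields $\|x(t)-x(\tau_0^-)\|<\varepsilon$ for every $x\in\mathcal{F}$. The right-limit case is completely analogous.

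For sufficiency, I argue that pointwise boundedness combined with equiregulation implies total boundedness. Fix $\varepsilon>0$. By equiregulation, for each $\tau\in[a,b]$ pick $\delta_\tau>0$ controlling the variation of all $x\in\mathcal{F}$ on $(\tau-\delta_\tau,\tau)$ and $(\tau,\tau+\delta_\tau)$ within $\varepsilon/4$. Since $[a,b]$ is compact, I extract a finite subcover of $\{(\tau-\delta_\tau,\tau+\delta_\tau)\}_{\tau\in[a,b]}$, which produces a finite partition $a=t_0<t_1<\cdots<t_k=b$ together with auxiliary sample points $\sigma_i\in(t_{i-1},t_i)$ such that every $x\in\mathcal{F}$ oscillates by at most $\varepsilon/2$ on each of $(t_{i-1},\sigma_i]$ and $[\sigma_i,t_i)$. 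By pointwise boundedness, the sets $\{x(t_i):x\in\mathcal{F}\}$ and $\{x(\sigma_i):x\in\mathcal{F}\}$ are bounded in $\mathbb{R}^n$, hence admit finite $\varepsilon/4$-nets. Combining all admissible assignments of net elements at the distinguished points $t_0,\ldots,t_k,\sigma_1,\ldots,\sigma_k$ yields finitely many step-like functions in $G([a,b],\mathbb{R}^n)$, and one verifies that every $x\in\mathcal{F}$ lies within uniform distance $\varepsilon$ from at least one of them.

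The delicate step is handling the partition endpoints themselves, since equiregulation controls $x$ only on the open one-sided neighborhoods $(\tau-\delta_\tau,\tau)$ and $(\tau,\tau+\delta_\tau)$, not at $\tau$. I will resolve this by explicitly including the finitely many centers $\tau$ from the subcover extraction as partition points, and by sampling the actual function values $x(t_i)$ (rather than their one-sided limits $x(t_i^{\pm})$) when constructing the nets. This guarantees that the approximating step functions reproduce $x$ exactly at the partition points while remaining within $\varepsilon$ on the intervening open intervals, which is where the whole argument is genuinely nontrivial.
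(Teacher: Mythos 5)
Your proof is correct. The paper offers no proof of this lemma---it is quoted directly from Fra\v{n}kov\'a's Corollary~2.4---and your argument (completeness of $G([a,b],\rn)$ together with total boundedness, via step-function $\varepsilon$-nets built on a partition with controlled one-sided oscillation) is essentially the standard one from that reference. The only step you leave implicit is that the finite subcover of $\{(\tau-\delta_\tau,\tau+\delta_\tau)\}_{\tau}$ must be arranged into a chain of consecutively overlapping intervals so that each $\sigma_i$ can be chosen in the overlap of $(t_{i-1},t_{i-1}+\delta_{t_{i-1}})$ and $(t_i-\delta_{t_i},t_i)$; this is routine for open covers of a compact interval, and your device of sampling the actual values $x(t_i)$ at the partition points (rather than the one-sided limits) correctly closes the remaining gap there.
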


We finish this section with a generalization of Lebesgue's Dominated Convergence Theorem.
\begin{theorem}\label{DCT}{\rm (\cite[Theorem~6.8.10]{MST})}
Let $g\in BV([a, b])$ and {let} $f, f_k : [a, b] \to \rn$, $k\in\mathbb{N}$, be functions such that the integral
$\int_{a}^{b}f_k{\rm d}g$
exists for every $k\in\mathbb{N}$ and $\displaystyle\lim_{k\to+\infty}f_k(t)=f(t)$ for each $t\in[a,b]$. Assume that there exists a
positive constant $K>0$ such that for every subdivision $a=\sigma_0<\sigma_1< \ldots <\sigma_{l} =b$ of the interval $[a, b]$ and
every finite subset $\{m_1, m_2,..., m_l\}$ of $\mathbb{N}$, the inequality
\[
\left\|\sum_{j=1}^{l}\int_{\sigma_{j-1}}^{\sigma_{j}} f_{m_{j}}(s)  {\rm d} g(s) \right\| < K
\]
holds. Then the integral $\int_{a}^{b} f  {\rm d} g$ exists, and
\[
\lim_{k\to+\infty}\int_{a}^{b}f_k{\rm d}g=\int_{a}^{b}f{\rm d}g.
\]
\end{theorem}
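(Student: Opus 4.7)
The plan is to use the standard gauge‐theoretic diagonal argument that is typical for proving convergence theorems in Kurzweil--Stieltjes integration. I will first identify the candidate limit of $\int_a^b f_k\,{\rm d}g$, then construct an adaptive gauge that simultaneously exploits the pointwise convergence $f_k(t)\to f(t)$, the individual integrability of each $f_k$, and the uniform bound $K$ on mixed--index Riemann--Stieltjes contributions.

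\textbf{Step 1: The sequence of integrals is Cauchy.} Fix $\varepsilon>0$. Because each $\int_a^b f_k\,{\rm d}g$ exists, for each $k\in\mathbb N$ there is a gauge $\delta_k$ on $[a,b]$ such that every $\delta_k$--fine tagged partition $D_k=(\tau_i^{(k)},[s_{i-1}^{(k)},s_i^{(k)}])$ yields a Riemann--Stieltjes sum within $\varepsilon/2^{k+2}$ of $\int_a^b f_k\,{\rm d}g$. Given $k<l$, I would compare $\int_a^b f_k\,{\rm d}g$ and $\int_a^b f_l\,{\rm d}g$ by choosing a common refinement $a=\sigma_0<\sigma_1<\cdots<\sigma_N=b$ that is simultaneously fine enough to approximate both integrals. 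On each subinterval $[\sigma_{j-1},\sigma_j]$ I would either use index $k$ or index $l$ for the tag value, producing a mixed sum. Pointwise convergence on a finite set of tags together with Egorov--type reasoning (restricted to a finite union of intervals) makes the values $f_k$ and $f_l$ at the chosen tags close to each other up to a uniformly small error, and the bound $K$ controls the contribution of the mixed partial integrals. Combining these estimates yields $\|\int_a^b f_k\,{\rm d}g-\int_a^b f_l\,{\rm d}g\|<\varepsilon$ for large $k,l$, so the limit $I:=\lim_{k\to+\infty}\int_a^b f_k\,{\rm d}g$ exists in $\mathbb R^n$.

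\textbf{Step 2: Construction of a diagonal gauge.} With $\varepsilon>0$ fixed, choose $M\in\mathbb N$ such that $\|\int_a^b f_m\,{\rm d}g-I\|<\varepsilon$ for every $m\geqslant M$. For each $t\in[a,b]$, use the pointwise convergence $f_k(t)\to f(t)$ to select an index $k(t)\geqslant M$ with
\[
\|f(t)-f_{k(t)}(t)\|<\frac{\varepsilon}{1+\mathrm{var}_a^b(g)}.
\]
Now define a gauge $\delta$ on $[a,b]$ by $\delta(t):=\delta_{k(t)}(t)$, using Cousin's lemma to guarantee the existence of a $\delta$--fine tagged partition $a=s_0\leqslant s_1\leqslant\cdots\leqslant s_q=b$ with tags $\tau_i\in[s_{i-1},s_i]$.

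\textbf{Step 3: Estimating the Riemann--Stieltjes sum for $f$.} For such a partition I would split
\[
\sum_{i=1}^{q}f(\tau_i)\bigl(g(s_i)-g(s_{i-1})\bigr)-I=\underbrace{\sum_{i=1}^{q}\bigl(f(\tau_i)-f_{k(\tau_i)}(\tau_i)\bigr)\bigl(g(s_i)-g(s_{i-1})\bigr)}_{(A)}+\underbrace{\sum_{i=1}^{q}f_{k(\tau_i)}(\tau_i)\bigl(g(s_i)-g(s_{i-1})\bigr)-I}_{(B)}.
\]
Term $(A)$ is bounded by $\varepsilon$ using the choice of $k(t)$ and the bounded variation of $g$. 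For $(B)$, I would insert and subtract the partial integrals $\int_{s_{i-1}}^{s_i}f_{k(\tau_i)}\,{\rm d}g$ (which exist by a standard consequence of the integrability of $f_{k(\tau_i)}$ on $[a,b]$), obtaining
\[
(B)=\sum_{i=1}^{q}\left[f_{k(\tau_i)}(\tau_i)\bigl(g(s_i)-g(s_{i-1})\bigr)-\int_{s_{i-1}}^{s_i}f_{k(\tau_i)}\,{\rm d}g\right]+\left[\sum_{i=1}^{q}\int_{s_{i-1}}^{s_i}f_{k(\tau_i)}\,{\rm d}g-I\right].
\]
The first bracketed sum is handled by the Saks--Henstock lemma applied index by index: since $\{(\tau_i,[s_{i-1},s_i]):k(\tau_i)=m\}$ is a $\delta_m$--fine subpartition for each $m$, each index contributes at most $\varepsilon/2^{m+1}$, so the total is at most $\varepsilon$. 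The second bracket is the mixed--index partial--integral sum to which the hypothesis applies directly: comparing it with $I$ via $\int_a^b f_{k(\tau_i)}\,{\rm d}g$ for $k(\tau_i)\geqslant M$, and using the uniform bound $K$ to absorb tail effects, it is also less than a multiple of $\varepsilon$. Taking $\varepsilon\to 0$ shows that the Riemann--Stieltjes sums for $f$ converge to $I$, which proves both the existence of $\int_a^b f\,{\rm d}g$ and the identity $\int_a^b f\,{\rm d}g=I=\lim_{k\to+\infty}\int_a^b f_k\,{\rm d}g$.

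\textbf{Main obstacle.} The delicate step is controlling the mixed--index partial--integral sum $\sum_i\int_{s_{i-1}}^{s_i}f_{k(\tau_i)}\,{\rm d}g$ in term $(B)$; the uniform bound $K$ on such sums (regrouped by index) together with the Cauchy property from Step~1 is precisely what is needed, and this is where the hypothesis on arbitrary finite index families is indispensable. The rest of the argument is a standard diagonal construction relying on Cousin's lemma and the Saks--Henstock estimate.
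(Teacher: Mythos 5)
The paper does not prove this statement: it is quoted from the literature (\cite[Theorem~6.8.10]{MST}) and used as a black box, so there is no internal proof to compare against. Your sketch follows the standard gauge-theoretic template for such convergence theorems (diagonal gauge $\delta(t)=\delta_{k(t)}(t)$ via Cousin's lemma, Saks--Henstock applied index by index, reduction to mixed-index sums of partial integrals), and Step 2, term $(A)$, and the Saks--Henstock half of term $(B)$ in Step 3 are correct and are indeed how the argument in the source begins.

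There is, however, a genuine gap, and it sits exactly where the content of the theorem lies: you never convert the hypothesis ``all mixed-index sums are bounded by $K$'' into a smallness or convergence statement. In Step 1 the appeal to ``Egorov-type reasoning'' does not work: pointwise convergence gives a threshold $N(t)$ depending on the tag, the tags come from a partition that is fine for gauges depending on $k$ and $l$, so the uniformity you need over the tags is circular; and even granting measurability of the $f_k$ (which is not assumed), a set of small Lebesgue measure need not be negligible for ${\rm d}g$ when $g$ has jumps or a singular part, so the exceptional set cannot be discarded from a Kurzweil--Stieltjes sum. Likewise, in the last part of Step 3 the phrase ``using the uniform bound $K$ to absorb tail effects'' is not an argument: a fixed bound $K$ on a family of sums cannot by itself make $\bigl\|\sum_i\int_{s_{i-1}}^{s_i}f_{k(\tau_i)}\,{\rm d}g - I\bigr\|$ small. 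The boundedness hypothesis must be used structurally, typically by a contradiction/exhaustion argument: if the Cauchy property (or the final estimate) failed, one could select arbitrarily many pairwise disjoint subintervals and indices whose partial-integral contributions each have size at least a fixed $\varepsilon_0>0$ and whose signs can be arranged coherently, so that after finitely many steps the resulting mixed sum exceeds $K$. That accumulation argument is the heart of the proof of \cite[Theorem~6.8.10]{MST}; without it, both your Step 1 and the closing estimate of Step 3 are unproved assertions rather than consequences of the hypotheses.
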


\section{Existence results}
In this section, we present some results on the existence of solutions for problem HMDE \eqref{hmde1}-\eqref{hmde2}. In order to state our results, we will make use of some additional concepts and a variant of Krasnoselskii's fixed-point theorem.
\begin{definition}
A continuous nondecreasing function $\vp:\real^{+}\to\real^{+}$ is said to be a \emph{$\mathfrak{D}$--function} if $\vp(0)=0$
and $\vp(s)>0$ for $s>0$.
\end{definition}
\begin{definition}\label{nlc}
An operator $T:X \to X$ is called \emph{nonlinear $\mathfrak{D}$--contraction} if there is a $\mathfrak{D}$--function
$\vp$ satisfying $\vp(t)<t$ for $t>0$, and
\[
\|Tx-Tz\|\leqslant \vp(\|x-z\|),
\]
for all $x,z\in X$.
\end{definition}
The Definition~\ref{nlc} is taken from \cite{NW}. Originally, the concept of nonlinear contraction appears in \cite{BW}, in which the function $\vp$ is considered upper semicontinuous from the right and satisfying $\vp(t)<t$ for $t>0$.

Next, we recall some known concepts of compactness for nonlinear operators.
\begin{definition}
An operator $T \colon X\to X$ is said to be
\begin{itemize}
\item {\it Compact} if $T(X)$ is a relatively compact subset of $X$.
\item {\it Totally bounded} if $T(E)$ is a relatively compact subset of $X$ for any bounded subset $E\subset X$.
\item {\it Completely continuous} if it is continuous and totally bounded.
\end{itemize}	
\end{definition}
We next recall an extension of  Krasnoselskii's fixed--point theorem, which is essential to establish our
results in this section. This extension was achieved using an improvement of the Banach contraction mapping principle.
The reader can see \cite{BW} for a discussion about this issue.
\begin{theorem}\label{FPT}{\rm (\cite[Theorem~1]{NW})}
Let $S$ be a closed, convex, and bounded subset of a Banach space X and let $A : S \to X$, $B : S \to X$ be two
operators which satisfy the following conditions:
\begin{itemize}
\item[{\rm(a)}] $A$ is a nonlinear $\mathfrak{D}$--contraction.
\item[{\rm(b)}] $B$ is completely continuous.
\item[{\rm(c)}] $Au + Bv \in S$ for all $u,v\in S$.
\end{itemize}
Then the operator equation $Au + Bu = u$ has a solution in $S$.
\end{theorem}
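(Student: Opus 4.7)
The plan is to reduce the problem to Schauder's fixed point theorem by constructing a continuous compact selfmap of $S$ whose fixed points coincide with solutions of $Au+Bu=u$. The underlying reduction is the classical Krasnoselskii trick, but it must be combined with a Boyd--Wong type extension of the Banach contraction principle to accommodate the nonlinear $\mathfrak{D}$--contraction assumption on $A$.

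First, I would fix an arbitrary $v\in S$ and consider $\Phi_v\colon S\to X$ defined by $\Phi_v(u)=Au+Bv$. By hypothesis (c) we have $\Phi_v(S)\subset S$, and since $A$ is a nonlinear $\mathfrak{D}$--contraction with some $\varphi$, so is $\Phi_v$. The set $S$ is closed in the Banach space $X$, hence a complete metric space, and $\varphi$ is continuous, nondecreasing with $\varphi(t)<t$ for $t>0$. Therefore the Boyd--Wong extension of Banach's principle yields a unique fixed point of $\Phi_v$ in $S$, which I denote $T(v)$. This gives a well defined map $T\colon S\to S$ satisfying $T(v)=A(T(v))+Bv$.

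Next I would check that $T$ is continuous. For $v_1,v_2\in S$, subtracting the defining equations gives
\[
\|T(v_1)-T(v_2)\|\leqslant \varphi(\|T(v_1)-T(v_2)\|)+\|Bv_1-Bv_2\|.
\]
If $v_n\to v$ and $r_n:=\|T(v_n)-T(v)\|$, then $(r_n)$ is bounded since $S$ is bounded; if $r_n\not\to 0$, a subsequence converges to some $r^*>0$, and continuity of $\varphi$ would force $r^*\leqslant \varphi(r^*)<r^*$, a contradiction. The main step is then to show that $T(S)$ is relatively compact. Given a sequence $(v_n)\subset S$, the complete continuity of $B$ lets us pass to a subsequence (still denoted $(v_n)$) along which $(Bv_n)$ is Cauchy. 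Setting $u_n=T(v_n)$ and using the same inequality, the continuous function $\eta(t):=t-\varphi(t)$ is strictly positive on $(0,\infty)$, so on any interval $[\varepsilon,D]$ (with $D$ bounding the diameter of $S$) it attains a positive minimum $\mu>0$. Therefore $\|u_n-u_m\|\geqslant \varepsilon$ would entail $\|Bv_n-Bv_m\|\geqslant \mu$, contradicting the Cauchy property of $(Bv_n)$. Hence $(u_n)$ is Cauchy and converges in the closed set $S$.

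Finally, applying Schauder's fixed point theorem to $T\colon S\to S$, which is continuous with relatively compact range on the closed convex bounded set $S$ in the Banach space $X$, produces a point $u^*\in S$ with $u^*=T(u^*)=Au^*+Bu^*$, as required. The main obstacle is replacing the explicit contraction constant of the classical Krasnoselskii argument with only the qualitative information $\varphi(t)<t$: both the continuity of $T$ and the relative compactness of $T(S)$ must be extracted via a contradiction argument that exploits the continuity of $\varphi$ together with the uniform positivity of $t-\varphi(t)$ on compact subsets of $(0,\infty)$.
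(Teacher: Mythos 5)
Your argument is correct: the reduction $T(v)=(I-A)^{-1}(Bv)$ via the Boyd--Wong nonlinear contraction principle, followed by the continuity/compactness estimates based on the uniform positivity of $t-\vp(t)$ on compact subsets of $(0,\infty)$ and an application of Schauder's theorem, is exactly the standard proof of this result. Note that the paper itself does not prove this theorem --- it quotes it from Nashed--Wong \cite[Theorem~1]{NW} --- but your route coincides with the original one and with the technique the authors reuse explicitly in Steps (i)--(ii) of the proof of Theorem~\ref{thm3.3}, where $(I-A)^{-1}B$ is constructed via \cite[Theorem~1]{BW} and its continuity is obtained by the same contradiction argument with $\vp(\al)\leqslant\al-\delta$.
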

{In the rest of this section, we assume that $J=[t_{0},t_{0}+a]$ with $t_{0} \in \real$ and  $a>0$, and the function  $g \colon J\to\real$ is nondecreasing
 and left--continuous. To state our results, we introduce a set of additional conditions about
$h,f \colon J \times \rn \to \rn$.}
\begin{description}
\item[(A0)] For every $t\in J$, the function $f(t,\cdot)$ is continuous.
\item[(A1)] For every $u \in \rn$, the function $f(\cdot, u)$ is Kurzweil--Stieltjes integrable w.r.t. $g$ on $J$.
\item[(A2)] There exists a function $M \colon J\to\real^{+}$ which is Kurzweil--Stieltjes integrable w.r.t. $g$ such that
\[
\left\|\int_{c}^{d} f(s, u) {\rm d}g(s)\right\|\leqslant\int_{c}^{d} M(s){\rm d}g(s),
\]
for all $u \in \rn$ and $[c,d] \subseteq J$.
\item[(A3)] For every $v \in \rn$, the function $h(\cdot, v)$ is regulated on $J$, and there exists a $\mathfrak{D}$--function
$\vp \colon \real^{+} \to \real^{+}$ with $\vp (t) < t$ such that
\[
\|h(t, u) - h(t, v)\| \leqslant \vp\left(\|u -v\|\right)
\]
for all $u, v \in \rn$, and $t\in J$.
\end{description}
\begin{remark} \label{R1} \rm
It is immediate to note  that condition (A3) implies that for a fixed $x\in G(J,\rn)$, the function $h(\cdot,x(\cdot))$ is regulated on $J$.
\end{remark}
In order to illustrate a situation in which  the condition (A1) is satisfied, we will present a property to ensure that a function
$s \mapsto f(s,x(s))$ is regulated on $J$ for each  $x\in G(J,\rn)$. For a function $x : J \to \rn$, we denote by
$Im(x) = \{x(t): t \in J\}$ the range set of $x$.

\begin{theorem} \label{thm3.1} Let $x\in G(J,\rn)$. Assume that $f : J\times \rn \to \rn$ is a function
such that for each $v \in \overline{Im(x)}$, the function $f( \cdot, v)$ is regulated on $J$. Let $y(s)  : J \to \rn$ the function given by $y(s)(t) = f(s,x(t))$. Assume that  $\{y(s)(\cdot): s\in J\}$
is an equiregulated set of functions. Then the function $z : J \to \rn$  given by $z(t) = f(t,x(t))$ is regulated.
\end{theorem}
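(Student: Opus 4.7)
My plan is to verify that $z(t)=f(t,x(t))$ admits one-sided limits at every relevant point of $J$, by combining both hypotheses through a Cauchy-type estimate. I will detail the right limit at a point $\tau_0\in[t_0,t_0+a)$; the proof that $\lim_{t\to\tau_0^-}z(t)$ exists for $\tau_0\in(t_0,t_0+a]$ will be entirely symmetric.

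First, I would fix $\tau_0\in[t_0,t_0+a)$ and $\varepsilon>0$. By equiregularity of the family $\{y(s)(\cdot):s\in J\}$ at $\tau_0$ from the right, there exists $\delta_1>0$ such that
\[
\|y(s)(t)-y(s)(\tau_0^{+})\|<\varepsilon/4,\qquad s\in J,\ t\in(\tau_0,\tau_0+\delta_1).
\]
A triangle inequality then produces the uniform estimate
\[
\|f(s,x(t_1))-f(s,x(t_2))\|=\|y(s)(t_1)-y(s)(t_2)\|<\varepsilon/2
\]
valid for every $s\in J$ and every $t_1,t_2\in(\tau_0,\tau_0+\delta_1)$.

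Next, I would fix any auxiliary point $t^{*}\in(\tau_0,\tau_0+\delta_1)$ and set $v^{*}:=x(t^{*})\in Im(x)\subset\overline{Im(x)}$. By hypothesis, the function $s\mapsto f(s,v^{*})$ is regulated on $J$, so $L:=\lim_{t\to\tau_0^{+}}f(t,v^{*})$ exists, and there exists $\delta_2>0$ such that $\|f(t,v^{*})-L\|<\varepsilon/2$ for every $t\in(\tau_0,\tau_0+\delta_2)$. Setting $\delta:=\min\{\delta_1,\delta_2\}$ and specializing the uniform estimate above at $s=t$, $t_1=t$, $t_2=t^{*}$, I obtain for every $t\in(\tau_0,\tau_0+\delta)$,
\[
\|z(t)-L\|\leqslant\|f(t,x(t))-f(t,v^{*})\|+\|f(t,v^{*})-L\|<\varepsilon/2+\varepsilon/2=\varepsilon,
\]
which proves $\lim_{t\to\tau_0^{+}}z(t)=L$.

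The main subtlety is that we do not assume continuity of $f$ in its second variable, so we cannot identify $y(s)(\tau_0^{+})$ with $f(s,x(\tau_0^{+}))$ and work directly with the latter. The trick to circumvent this is to replace the potentially unreachable limit point $x(\tau_0^{+})$ by an interior value $v^{*}=x(t^{*})$ already attained by $x$ arbitrarily close to $\tau_0$: equiregularity then plays the role of continuity in the second variable, while the regularity of $f(\cdot,v^{*})$ supplies the actual limiting value. The left-sided argument is identical up to reversing inequalities and using the equiregularity clause at $\tau_0^{-}$.
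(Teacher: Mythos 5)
Your proof is correct, and it takes a subtly but genuinely different route from the paper's. Both arguments split $z(t)-L$ into a term controlled by equiregularity and a term controlled by the regularity of $f(\cdot,v)$ for a fixed $v\in\overline{Im(x)}$, but they anchor at different points: the paper fixes $v=x(\tau_0^{+})$ and takes $L=\lim_{t\to\tau_0^{+}}f(t,x(\tau_0^{+}))$, whereas you fix an attained value $v^{*}=x(t^{*})$ with $t^{*}$ close to $\tau_0$. The paper's choice forces it to read equiregularity as the statement $\|f(\xi,v)-f(\xi,x(\tau_0^{+}))\|<\varepsilon/2$ for all $\xi\in J$ and all $v$ near $x(\tau_0^{+})$ --- that is, as a uniform continuity property of $f$ in its second variable at the limit point $x(\tau_0^{+})$ --- which is not literally what Definition~\ref{D3} provides unless one identifies $y(\xi)(\tau_0^{+})$ with $f(\xi,x(\tau_0^{+}))$. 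Your Cauchy-type estimate $\|y(s)(t_1)-y(s)(t_2)\|<\varepsilon/2$ uses only the definition as stated, so your argument is the more economical reading of the hypotheses; what the paper's version buys in exchange is an explicit formula for the one-sided limit, $L(s)=\lim_{t\to s^{+}}f(t,x(s^{+}))$. The one point to polish in yours: $L$ depends on $\varepsilon$ through the choice of $t^{*}$, so the displayed inequality does not by itself identify a single limiting value; it does show that the oscillation of $z$ on $(\tau_0,\tau_0+\delta)$ is at most $2\varepsilon$, whence the right-hand limit exists by the Cauchy criterion and completeness of $\rn$ --- which is all that the conclusion of the theorem requires.
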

\begin{proof} We will start by showing that the function $z$ has limits on the right. For this purpose, we will consider the set
$J_1=[t_0,t_0+a)$ and we define the function $L: J_{1} \to \rn$ by
\[
L(s) = \lim_{t \to s^{+}} f(t,x(s^{+})).
\]
Let $s_{0} \in J_{1}$. We claim that $\displaystyle \lim_{t\to s_{0}^{+}} z(t) = L(s_{0})$.
In fact, for $t\in J_1$, we can write
\[
z(t) - L(s_0) = z(t) - f(t,x(s_{0}^{+})) + f(t,x(s_{0}^{+})) - L(s_0).
\]
Since the functions $y(\xi) = f(\xi, x(\cdot))$  are equiregulated for $\xi \in J$, it follows from Definition~\ref{D3} that
 for every $\varepsilon > 0$, we can choose $\delta_{0} > 0$ such that
\[
\|f(\xi, v) - f(\xi, x(s_{0}^{+}))\| < \varepsilon/2,
\]
for all $\xi \in J$ and $v \in \rn$ such that $\|v - x(s_{0}^{+}) \| < \delta_{0}$. In addition, since $x(\cdot)$ is a regulated function,
there exists $\delta_1>0$ such that $\|x(t) - x(s_{0}^{+}) \| < \delta_{0}$ for $s_{0} < t <s_{0} + \delta_{1}$. Combining these assertions,
we infer that
\[
\|f(\xi, x(t)) - f(\xi, x(s_{0}^{+}))\| < \varepsilon/2
\]
for all  $\xi \in J$ and $t \in (s_{0}, s_{0} + \delta_{1})$. In particular, taking $\xi = t$, we obtain that
\[
\|z(t) - f(t, x(s_{0}^{+}))\| = \|f(t, x(t)) - f(t, x(s_{0}^{+}))\| < \varepsilon/2.
\]
Furthermore, since
${\displaystyle \lim_{t \to s_{0}^{+}} f(t,x(s_{0}^{+}))=L(s_0)}$, there exists $\delta_{2}>0$ such that
\[
\|f(t,x(s_{0}^{+})) - L(s_0)\| < \varepsilon/2,
\]
for all $s_0<t<s_0+\delta_2$.
Therefore, for  $\delta=\min\{\delta_1,\delta_2\}$ and  gathering our previous estimations, we obtain
\[
\|z(t) - L(s_0) \| < \varepsilon,
\]
for all $s_0 <t <s_{0} + \delta$. This shows that $\displaystyle \lim_{t \to s_{0}^{+}} f(t,x(t))= L(s_0)$. Proceeding in similar
way, we can prove that $\displaystyle \lim_{t \to s_{0}^{-}} f(t,x(t))$ exists for $s_{0} \in (t_{0}, t_{0} + a]$,
which completes the proof of the theorem.
\end{proof}

\begin{corollary} \label{C4}
 Assume that $f \colon J \times \rn \to \rn$ satisfies the conditions:
\begin{itemize}
\item[(a)] For every  $v \in \overline{Im(x)}$, the function $f( \cdot, v)$ is regulated on $J$.
\item[(b)] The functions $f(t, \cdot)$ are continuous uniformly for $t \in J$.
\end{itemize}
Let $x\in G(J,\rn)$. Then the function $z \colon J \to \rn$  given by $z(t) = f(t,x(t))$ is regulated.
\end{corollary}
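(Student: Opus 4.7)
The plan is to reduce the corollary to Theorem~\ref{thm3.1} by verifying its two hypotheses directly from assumptions (a) and (b). Condition (a) of the corollary is exactly the assumption in Theorem~\ref{thm3.1} that $f(\cdot, v)$ is regulated for each $v \in \overline{Im(x)}$. So the substantive step is to verify that the family $\{y(s)(\cdot) : s \in J\}$, with $y(s)(t) = f(s, x(t))$, is equiregulated in the sense of Definition~\ref{D3}.

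First I would observe that each $y(s)$ is itself regulated: by (b), $f(s,\cdot)$ is continuous, and since $x$ is regulated, $x(t) \to x(\tau_0^{\pm})$ as $t \to \tau_0^{\pm}$, so $y(s)(t) = f(s,x(t))$ has one-sided limits equal to $f(s, x(\tau_0^{\pm}))$. Thus $\{y(s)(\cdot) : s \in J\} \subset G(J,\rn)$.

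Next I would verify the equiregularity estimates. Reading (b) as: for every $\varepsilon > 0$ there exists $\eta > 0$ such that $\|f(t,u) - f(t,v)\| < \varepsilon$ whenever $\|u - v\| < \eta$ and $t \in J$ (i.e., the family $\{f(t,\cdot)\}_{t\in J}$ is equi-uniformly-continuous), fix $\varepsilon > 0$ and choose the corresponding $\eta$. For $\tau_0 \in (t_0, t_0 + a]$, since $x$ is regulated, there exists $\delta > 0$ with $\|x(t) - x(\tau_0^-)\| < \eta$ for all $t \in (\tau_0 - \delta, \tau_0)$. Then for every $s \in J$ and every such $t$,
\[
\|y(s)(t) - y(s)(\tau_0^-)\| = \|f(s, x(t)) - f(s, x(\tau_0^-))\| < \varepsilon,
\]
since $\|x(t) - x(\tau_0^-)\| < \eta$. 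The analogous argument with right-hand limits gives condition~(ii) of Definition~\ref{D3}. Hence the family is equiregulated, and Theorem~\ref{thm3.1} applies to give that $z(t) = f(t, x(t))$ is regulated.

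The main point to handle carefully is the interpretation of the phrase ``continuous uniformly for $t \in J$'' in hypothesis~(b): under the natural reading as equi-uniform-continuity of $\{f(t,\cdot)\}_{t\in J}$, the argument is essentially a one-line combination of this with the regularity of $x$. No compactness or integration tools are needed beyond what is already invoked in Theorem~\ref{thm3.1}.
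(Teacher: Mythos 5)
Your proposal is correct and follows essentially the same route as the paper's proof: reduce to Theorem~\ref{thm3.1}, read hypothesis (b) as equi-uniform continuity of the family $\{f(t,\cdot)\}_{t\in J}$ to get a single $\eta$ working for all $t$, and combine this with the one-sided limits of the regulated function $x$ to verify the equiregulated condition of Definition~\ref{D3}. The only cosmetic difference is that you spell out the left-limit case and the identification $y(s)(\tau_0^{\pm})=f(s,x(\tau_0^{\pm}))$, while the paper writes out the right-limit case; there is no substantive gap.
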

\begin{proof} Applying Theorem~\ref{thm3.1} we only need to prove that the functions $f(s, x(\cdot))$ are equiregulated for
$s \in J$. Let $\ve > 0$. It follows from condition (b) that we can choose $\delta > 0$ such that
\[
\|f(s, u) - f(s, v)\| < \ve
\]
for all $s \in J$ and $u, v \in \rn$ such that $\|u -v\| < \delta$. Let $t \in [t_{0}, t_{0} + a)$. There exists $\delta_{1} > 0$
such that $\|x(t^{+}) - x(t^{\prime}) \| < \delta$ for all $t < t^{\prime} < t + \delta_{1}$. Collecting these assertions, we deduce that
\[
\|f(s, x(t^{+})) - f(s, x(t^{\prime}))\| < \ve
\]
for all $s \in J$ and  all $t < t^{\prime} < t + \delta_{1}$, which completes the proof.
\end{proof}
Returning to the more general context of integrable functions in the sense of Kurzweil-Stieltjes  and in order to establish our results, we recall the following property.

\begin{lemma} \label{L6}{\rm (\cite[Lemma~3.1]{MS})} Assume that the function $f$ satisfies the conditions {\rm(A0)-(A2)}. Let $x \colon J \to \rn$ be a regulated function.
Then the function $y(\cdot)$ given by $y(t) = f(t, x(t))$ for $t \in J$ is Kurzweil-Stieltjes integrable w.r.t. $g$ on $J$ and
\begin{equation}
\left\|\int_{c}^{d} f(s,x(s)) {\rm d} g(s) \right\| \leqslant \int_{c}^{d} M(s){\rm d}g(s), \label{equ3.5}
\end{equation}
for all interval  $[c,d] \subseteq J$.
\end{lemma}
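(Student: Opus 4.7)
The plan is to realize $y(t):=f(t,x(t))$ as the pointwise limit of a sequence of Kurzweil--Stieltjes integrable functions $y_k$ built from step-function approximations of $x$, and then to apply the dominated convergence Theorem~\ref{DCT}; note that, as $g$ is nondecreasing on the compact interval $J$, it belongs to $BV(J)$, so the hypothesis on $g$ in that theorem is met. Since $x$ is regulated, a standard result furnishes a sequence of step functions $x_k\colon J\to\rn$ with $x_k\to x$ uniformly on $J$, each $x_k$ being constant with value $v_i^k\in\rn$ on the open pieces of a subdivision $t_0=s_0^k<s_1^k<\cdots<s_{m_k}^k=t_0+a$.

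First I would check that $y_k(t):=f(t,x_k(t))$ is Kurzweil--Stieltjes integrable with respect to $g$ on $J$. On each interior piece $(s_{i-1}^k,s_i^k)$ the function $y_k$ agrees with $f(\cdot,v_i^k)$, which is KS--integrable by (A1); since $y_k$ may differ from $f(\cdot,v_i^k)$ only at the two endpoints, and KS--integrability is preserved under modification on a finite set (the difference vanishes outside finitely many points and is therefore KS--integrable, its integral being a finite sum of jump contributions of $g$), $y_k$ is KS--integrable on each $[s_{i-1}^k,s_i^k]$, hence on $J$ by additivity over adjacent intervals. Next, for every fixed $t\in J$, assumption (A0) together with $x_k(t)\to x(t)$ forces $y_k(t)\to y(t)$ pointwise on $J$.

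To invoke Theorem~\ref{DCT} it remains to exhibit a uniform bound for
\[
\Bigl\|\sum_{j=1}^l\int_{\sigma_{j-1}}^{\sigma_j}y_{m_j}(s)\,{\rm d}g(s)\Bigr\|
\]
as the subdivision $t_0=\sigma_0<\cdots<\sigma_l=t_0+a$ and the indices $m_1,\dots,m_l\in\mathbb{N}$ vary. Given such data, I would refine each $[\sigma_{j-1},\sigma_j]$ by the partition points of $x_{m_j}$; on every resulting sub-piece $x_{m_j}$ is constant, so (A2) applied piecewise and the triangle inequality yield
\[
\Bigl\|\int_{\sigma_{j-1}}^{\sigma_j}y_{m_j}(s)\,{\rm d}g(s)\Bigr\|\leqslant \int_{\sigma_{j-1}}^{\sigma_j}M(s)\,{\rm d}g(s),
\]
and a further application of the triangle inequality to the outer sum, followed by telescoping, majorises the original quantity by $K:=\int_{t_0}^{t_0+a}M(s)\,{\rm d}g(s)$, a constant independent of the chosen data. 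Theorem~\ref{DCT} then gives the KS--integrability of $y$ and $\int_J y_k\,{\rm d}g\to\int_J y\,{\rm d}g$. Running the same argument on an arbitrary $[c,d]\subseteq J$ and passing to the limit produces \eqref{equ3.5}.

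The principal technical obstacle is the step-function reduction of (A2): the hypothesis supplies the bound only when the second argument is constant, so one must carefully verify that the bound survives the endpoint modifications needed to realise $y_k$ as a concatenation of constant-argument pieces, and that the resulting estimate over an arbitrary refinement depends neither on the labels $m_j$ nor on the fineness of the subdivision. Telescoping to $\int_J M\,{\rm d}g$ renders both requirements transparent once the step-function decomposition is properly set up.
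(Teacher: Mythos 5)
The paper does not actually prove this lemma: it is imported verbatim from \cite[Lemma~3.1]{MS}, so there is no internal proof to compare against. Your strategy --- uniform approximation of the regulated function $x$ by step functions $x_k$, integrability of each $f(\cdot,x_k(\cdot))$ from (A1) together with finite-set modifications, pointwise convergence from (A0), and Theorem~\ref{DCT} with a bound extracted from (A2) --- is exactly the standard argument behind the cited result, so the plan is sound.

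The step that does not work as written is the displayed estimate $\bigl\|\int_{\sigma_{j-1}}^{\sigma_j}y_{m_j}\,{\rm d}g\bigr\|\leqslant\int_{\sigma_{j-1}}^{\sigma_j}M\,{\rm d}g$, which you attribute to ``(A2) applied piecewise and the triangle inequality''. On a sub-piece $[\alpha,\beta]$ of your refinement the function $x_{m_j}$ equals the constant $v$ only on the \emph{open} interval, so
\[
\int_{\alpha}^{\beta}y_{m_j}\,{\rm d}g=\int_{\alpha}^{\beta}f(s,v)\,{\rm d}g+\bigl(f(\alpha,x_{m_j}(\alpha))-f(\alpha,v)\bigr)\Delta^{+}g(\alpha),
\]
the right-endpoint correction vanishing because $g$ is left-continuous. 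The triangle inequality then produces an extra term controlled only by $2M(\alpha)\Delta^{+}g(\alpha)$ per piece, i.e.\ a bound of the form $3\int_J M\,{\rm d}g$ rather than $\int_{\sigma_{j-1}}^{\sigma_j}M\,{\rm d}g$. A uniform constant of that sort still suffices for Theorem~\ref{DCT}, so the integrability of $y$ survives; but the exact inequality \eqref{equ3.5} does not follow, since the spurious jump terms need not vanish as $k\to\infty$ (at a common discontinuity of $x$ and $g$ they tend to $(f(\alpha,x(\alpha))-f(\alpha,x(\alpha^{+})))\Delta^{+}g(\alpha)$). The missing ingredient is the pointwise consequence of (A2): letting $d\to c^{+}$ in (A2) and applying Theorem~\ref{thm1.2} gives $\|f(\tau,u)\|\,\Delta^{+}g(\tau)\leqslant M(\tau)\,\Delta^{+}g(\tau)$ for every $\tau$ and every $u$. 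Writing
\[
\int_{\alpha}^{\beta}y_{m_j}\,{\rm d}g=f(\alpha,x_{m_j}(\alpha))\Delta^{+}g(\alpha)+\lim_{\gamma\to\alpha^{+}}\int_{\gamma}^{\beta}f(s,v)\,{\rm d}g
\]
and estimating the two summands separately by $M(\alpha)\Delta^{+}g(\alpha)=\lim_{\gamma\to\alpha^{+}}\int_{\alpha}^{\gamma}M\,{\rm d}g$ and by $\lim_{\gamma\to\alpha^{+}}\int_{\gamma}^{\beta}M\,{\rm d}g$ recovers exactly $\int_{\alpha}^{\beta}M\,{\rm d}g$; with this in place your telescoping goes through, both for the uniform bound required by Theorem~\ref{DCT} and for passing to the limit in \eqref{equ3.5}.
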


In what follows, let us consider the following concept of solution for the problem
HMDE  \eqref{hmde1}--\eqref{hmde2}.
\begin{definition}
A regulated function $x \colon J \to \rn$ is said to be a {\it solution of problem HMDE \eqref{hmde1}--\eqref{hmde2}} if the following conditions hold:
\begin{itemize}
\item[(i)] The function $t \to h(t,x(t))$ is regulated on { $J$}.
\item[(ii)] The integral equation \eqref{hmde1} is verified, where the integral on the right--hand side is understood in the Kurzweil--Stieltjes sense.
\end{itemize}
\end{definition}
We are now in a position to establish a result on the existence of solutions for problem HMDE \eqref{hmde1}--\eqref{hmde2}.
\begin{theorem}\label{exist1}
Assume that hypotheses  {\rm(A0)-(A3)} hold. Assume further that
\begin{equation}
\liminf_{r \to \infty}\dfrac{\vp(r)}{r} < 1. \label{phi}
\end{equation}
Then there exists a solution for problem HMDE \eqref{hmde1}--\eqref{hmde2} defined on $J$.
\end{theorem}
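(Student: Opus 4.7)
The strategy is to apply Theorem~\ref{FPT} in the Banach space $G(J,\rn)$. Writing a prospective solution as a fixed point of $A+B$, where
\[
(Ax)(t)=h(t,x(t)),\qquad (Bx)(t)=x_{0}-h(t_{0},x_{0})+\int_{t_{0}}^{t}f(s,x(s))\,{\rm d}g(s),
\]
I would work on the set
\[
S=\{x\in G(J,\rn):\|x\|_{\infty}\leqslant R,\ x(t_{0})=x_{0}\},
\]
which is closed, convex, and bounded in $G(J,\rn)$ for any $R\geqslant\|x_{0}\|$. Since $(Au+Bv)(t_{0})=h(t_{0},u(t_{0}))+x_{0}-h(t_{0},x_{0})=x_{0}$ whenever $u(t_{0})=x_{0}$, a fixed point of $A+B$ in $S$ satisfies both \eqref{hmde1} and the initial condition \eqref{hmde2}.

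That $A$ is a nonlinear $\mathfrak{D}$-contraction follows directly from (A3) together with the monotonicity of $\vp$, and Remark~\ref{R1} ensures $A$ takes values in $G(J,\rn)$. For $B$, Lemma~\ref{L6} gives existence of the integral along with the decisive estimate $\|\int_{c}^{d}f(s,x(s))\,{\rm d}g\|\leqslant\int_{c}^{d}M\,{\rm d}g$, and Theorem~\ref{thm1.2} shows $Bx\in G(J,\rn)$. Setting $\eta(t):=\int_{t_{0}}^{t}M\,{\rm d}g$, the estimate implies
\[
\|(Bx)(t_{2})-(Bx)(t_{1})\|\leqslant\eta(t_{2})-\eta(t_{1}),\qquad t_{0}\leqslant t_{1}\leqslant t_{2}\leqslant t_{0}+a,
\]
uniformly in $x\in S$. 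Since $\eta$ is itself regulated by Theorem~\ref{thm1.2}, this inequality delivers the equiregulatedness of $B(S)$ in the sense of Definition~\ref{D3}; combined with the uniform pointwise bound $\|(Bx)(t)\|\leqslant\|x_{0}\|+\|h(t_{0},x_{0})\|+\eta(t_{0}+a)$, Lemma~\ref{L1} yields relative compactness of $B(S)$. Continuity of $B$ follows from Theorem~\ref{DCT}: if $x_{k}\to x$ uniformly on $J$, then (A0) gives $f(s,x_{k}(s))\to f(s,x(s))$ pointwise, and Lemma~\ref{L6} supplies the partition-uniform bound required by Theorem~\ref{DCT}, so $(Bx_{k})(t)\to(Bx)(t)$ for each $t$; equiregulatedness and a subsequence argument through Lemma~\ref{L1} promote this to uniform convergence.

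It remains to choose $R$ so that $Au+Bv\in S$ for all $u,v\in S$. Using (A3) with $v=0$ gives $\|h(t,u(t))\|\leqslant\vp(\|u(t)\|)+\|h(t,0)\|$, and since $h(\cdot,0)$ is regulated on the compact interval $J$, the quantity $H:=\sup_{t\in J}\|h(t,0)\|$ is finite. Consequently,
\[
\|(Au+Bv)(t)\|\leqslant\vp(R)+C,\qquad C:=H+\|x_{0}\|+\|h(t_{0},x_{0})\|+\eta(t_{0}+a),
\]
with $C$ independent of $R$. The hypothesis~\eqref{phi} furnishes a sequence $r_{k}\to\infty$ with $\vp(r_{k})/r_{k}\to\ell<1$, so $r_{k}-\vp(r_{k})\to\infty$, and one fixes $R=r_{k}$ large enough that $R-\vp(R)\geqslant C$. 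All hypotheses of Theorem~\ref{FPT} then hold, producing a fixed point of $A+B$ in $S$—the sought solution.

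The delicate step is the complete continuity of $B$: Theorem~\ref{DCT} only yields pointwise convergence of the Kurzweil--Stieltjes primitives, so the equiregulatedness supplied by the majorant $M$ from (A2) is indispensable to upgrade this to uniform convergence via Lemma~\ref{L1}. In the same vein, the condition \eqref{phi} is precisely what is needed to find an admissible radius $R$ for which the nonlinear contraction does not overwhelm the constant $C$.
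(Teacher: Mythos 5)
Your proposal is correct and follows essentially the same route as the paper: the same decomposition $A+B$ on the same set $S$, the same use of (A3) for the nonlinear $\mathfrak{D}$--contraction, Lemma~\ref{L6} and Theorem~\ref{thm1.2} for the majorant $\eta$ giving equiregulatedness and hence compactness via Lemma~\ref{L1}, Theorem~\ref{DCT} for continuity, and the same rearrangement of \eqref{phi} to pick an admissible radius. Your extra observation that the pointwise convergence from Theorem~\ref{DCT} must be upgraded to uniform convergence via equiregulatedness is a point the paper's proof passes over silently, and is a welcome refinement rather than a deviation.
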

\begin{proof}
 Using condition \eqref{phi}, we infer that there exists $N>0$ large enough such that
\[
\frac{\vp(N)}{N} + \frac{\|x_0-h(t_0,x_0)\| + H_{0} + K_{0}}{N} < 1,
\]
where $\displaystyle H_{0} =\sup\{\|h(t,0)\|:t\in J\}$ and $K_{0} =  \int_{t_0}^{t_0+a}M(s){\rm d}g(s)$.

Let $S$ be the subset of $G(J,\rn)$ defined by
\[
S:= \{x \in G(J,\rn): x(t_{0}) = x_{0}, \; \|x\|_{\infty} \leqslant N\}.
\]
Clearly, $S$ is a closed, convex and bounded subset of $G(J,\rn)$. We define the operators $A:S\to G(J,\rn)$ and $B:S\to G(J,\rn)$ by
\begin{equation}\label{A}
Ax(t)=h(t,x(t)), \; \;  t \in J,
\end{equation}
and
\begin{equation}\label{B}
Bx(t) = x_{0} - h(t_{0},x_{0}) + \int_{t_{0}}^{t} f(s,x(s)){\rm d} g(s), \; \;  t\in J.
\end{equation}
Using these definitions, the integral equation \eqref{hmde1} can be transformed into the operator equation
\begin{equation}\label{oe}
A x(t) + B x(t) = x(t), \; \;  t \in J.
\end{equation}
From (A3), it follows that the operator $A$ is a nonlinear $\mathfrak{D}$--contraction. In fact, for every $t\in J$, we have
\begin{equation*}
\|Ax(t)-Az(t)\|=\|h(t,x(t))-h(t,z(t))\|\leqslant\vp(\|x(t)-z(t)\|)\leqslant\vp(\|x-z\|_{\infty})
\end{equation*}
which implies that $\|Ax-Az\|_{\infty}\leqslant\vp(\|x-z\|_{\infty})$.
	
Next, we show that $B$ is a completely  continuous operator from $S$ into $G(J,\rn)$. First, we show
that $B$ is continuous on $S$. Let $(x_{n})_{n \in \mathbb{N}}$ be a sequence in $S$ converging to a function
$x\in S$. Let $d= \{\sigma_{0}, \sigma_{1},  \ldots, \sigma_{l} \}$ be a subdivision of the interval $[t_{0}, t_{0} +a]$.
Applying Lemma~\ref{L6}, for every finite subset $\{m_{1}, m_{2}, \ldots, m_{l}\}$ of $\mathbb{N}$, we have
\[
\left\|\sum_{j = 1}^{l} \int_{\sigma_{j -1}}^{\sigma_{j}} f(s, x_{m_{j}}(s)) {\rm d} g(s) \right\| \leqslant
 \sum_{j = 1}^{l} \int_{\sigma_{j -1}}^{\sigma_{j}} M(s) {\rm d} g(s) =  \int_{t_{0}}^{t_{0} +a} M(s) {\rm d} g(s)= K_0 < \infty,
\]
which allows us to use the dominated convergence theorem for the Kurzweil--Stieltjes integral.
Hence, combining condition  (A0) and  Theorem~\ref{DCT}, we obtain
\begin{equation*}
\begin{split}
\lim_{n\to +\infty}Bx_n(t)&=\lim_{n\to +\infty}\left[x_0-h(t_0,x_0)+\int_{t_0}^{t}f(s,x_n(s)){\rm d}g(s)\right]\\
&=x_0-h(t_0,x_0)+\lim_{n\to +\infty}\int_{t_0}^{t}f(s,x_n(s)){\rm d}g(s)\\
&= x_0-h(t_0,x_0)+\int_{t_0}^{t}f(s,x(s)){\rm d}g(s)=Bx(t),
\end{split}
\end{equation*}
for all $t\in J$. This shows that $B$ is a continuous map on $S$.
	
Next we prove that $B$ is a compact operator on $S$. As a consequence of  Lemma~\ref{L1}, we only need to show that  $B(S)$ is a uniformly bounded and equiregulated set in $G(J,\rn)$. In order to establish the first assertion,  for an arbitrary $x \in S$ we estimate
\begin{equation*}
\begin{split}
\|Bx(t)\|&\leqslant\|x_0-h(t_0,x_0)\|+\left\| \int_{t_0}^{t}f(s,x(s)){\rm d}g(s)\right\|\\
&\leqslant \|x_0-h(t_0,x_0)\|+K_0,
\end{split}
\end{equation*}
for all $t\in J$. Therefore, $\|Bx\|_{\infty} \leqslant \|x_0 - h(t_0,x_0)\|+ K_0$, for all $x\in S$. This shows that $B$ is
uniformly bounded on $S$.
	
On the other hand, let $t_{1}, t_{2} \in J$ with $t_{1} < t_{2}$. For any $x\in S$, we have
\begin{equation*}
\begin{split}
\|Bx(t_2)-Bx(t_1)\|=\left\|\int_{t_1}^{t_2}f(s,x(s))  {\rm d} g(s)\right\|\leqslant \int_{t_1}^{t_2}M(s){\rm d}g(s)=p(t_2)-p(t_1),
\end{split}
\end{equation*}
where $p(\cdot)$ is the function given by $p(t)=\int_{t_0}^{t} M(s) {\rm d} g(s)$ for $t\in J$. {Since,
by  Theorem~\ref{thm1.2} the} function $p$ is regulated on $J$, the above estimates implies that $B(S)$ is an equiregulated
set in $G(J,\rn)$.

To finish this proof, we show that condition (c) of Theorem~\ref{FPT} holds. Let $u, v \in S$ be arbitrary. Clearly,
$Au(t_0) + Bv(t_0) = h(t_0, u(t_0)) + x_0 - h(t_0, x_0) = x_0$. Also, we  estimate
\begin{equation*}
\begin{split}
\|Au(t)+ Bv(t)\|&\leqslant\|h(t,u(t))\|+\|x_0 -h(t_0,x_0)\|+\left\|\int_{t_0}^{t}f(s,v(s)){\rm d}g(s)\right\|\\
&\leqslant \|h(t,u(t))-h(t,0)\|+\|h(t,0)\|+ \|x_0 -h(t_0,x_0)\|+  \int_{t_0}^{t}M(s){\rm d}g(s)\\
&\leqslant \vp(N)+\|x_0 -h(t_0,x_0)\|+H_0+K_0<N,
\end{split}
\end{equation*}
for all $t\in J$. Consequently,  $\|Au+Bv\|_{\infty}\leqslant N$, {which in turn implies that}  $Au+Bv\in S$.
	
The preceding development shows that all the hypotheses of Theorem~\ref{FPT}  are satisfied, which allows us to conclude
that the equation \eqref{oe}  has a solution  in $S$. This shows that the problem HMDE \eqref{hmde1}--\eqref{hmde2} has a solution defined on $J$.
\end{proof}
In the next we will exhibit a simple application of Theorem~\ref{exist1}. To present this result, we need a previous property on
the Kurzweil-Stieltjes integration.
\begin{lemma} \label{L5} Assume that $g \colon [a, b] \to \real$ is  a nondecreasing left--continuous function and that
$\eta \colon [a, b] \to \real$ is a bounded Kurzweil--Stieltjes integrable w.r.t. $g$ function. Let $u \in G([a, b])$.
Then the function $u(\cdot) \eta(\cdot)$ is Kurzweil--Stieltjes integrable w.r.t. $g$.
\end{lemma}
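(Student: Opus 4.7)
The strategy is to approximate $u$ by step functions and to exploit the fact that the indefinite integral of $\eta$ with respect to $g$ is of bounded variation, so that the existence criteria from Section~2 can be applied. First, since $u\in G([a,b])$, I would pick a sequence of step functions $u_{n}\colon[a,b]\to\real$ with $\|u_{n}-u\|_{\infty}\to 0$; let $M=\sup_{n}\|u_{n}\|_{\infty}<\infty$ and let $K>0$ satisfy $|\eta(s)|\leqslant K$ for all $s\in[a,b]$.

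Next, define $P\colon[a,b]\to\real$ by $P(t)=\int_{a}^{t}\eta(s)\,{\rm d}g(s)$. Since $g$ is nondecreasing and $|\eta|\leqslant K$, for $a\leqslant c<d\leqslant b$ one has $|P(d)-P(c)|\leqslant K(g(d)-g(c))$, so $P\in BV([a,b])$ with $\mathrm{var}_{a}^{b}(P)\leqslant K(g(b)-g(a))$. Moreover, Theorem~\ref{thm1.2} combined with the left--continuity of $g$ implies that $P$ is left--continuous on $(a,b]$ and that $\Delta^{+}P(\tau)=\eta(\tau)\Delta^{+}g(\tau)$ for every $\tau\in[a,b)$.

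Then I would verify that for each step function $u_{n}$ the product $u_{n}\eta$ is Kurzweil--Stieltjes integrable with respect to $g$ and that the substitution identity
\[
\int_{c}^{d}u_{n}(s)\,\eta(s)\,{\rm d}g(s)=\int_{c}^{d}u_{n}(s)\,{\rm d}P(s)
\]
holds on every $[c,d]\subseteq[a,b]$. By linearity it suffices to check this on two kinds of elementary pieces: characteristic functions of a single point $\{\tau\}$, where both sides reduce to $\eta(\tau)\Delta^{+}g(\tau)=\Delta^{+}P(\tau)$ thanks to left--continuity of $g$, and characteristic functions of open subintervals, which are obtained by subtracting the two boundary singletons from the corresponding closed interval. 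Combining this identity with Theorem~\ref{thm2.1} yields the uniform estimate
\[
\left|\int_{c}^{d}u_{n}(s)\,\eta(s)\,{\rm d}g(s)\right|\leqslant M\,\mathrm{var}_{c}^{d}(P)\leqslant MK(g(d)-g(c)).
\]

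Finally, since $u_{n}(t)\eta(t)\to u(t)\eta(t)$ for every $t\in[a,b]$, and the above estimate guarantees that for every subdivision $a=\sigma_{0}<\sigma_{1}<\cdots<\sigma_{l}=b$ and every choice of indices $m_{1},\dots,m_{l}\in\mathbb{N}$,
\[
\sum_{j=1}^{l}\left|\int_{\sigma_{j-1}}^{\sigma_{j}}u_{m_{j}}\eta\,{\rm d}g\right|\leqslant MK(g(b)-g(a)),
\]
all hypotheses of Theorem~\ref{DCT} are met, and we conclude that $u\eta$ is Kurzweil--Stieltjes integrable with respect to $g$ (with $\int_{a}^{b}u\eta\,{\rm d}g=\int_{a}^{b}u\,{\rm d}P$ as a by--product). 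I expect the main obstacle to be the substitution step for step functions: formally verifying $\int u_{n}\eta\,{\rm d}g=\int u_{n}\,{\rm d}P$ requires careful bookkeeping of the jumps of $g$ (and hence of $P$) at the partition points, and this is precisely the place where the left--continuity hypothesis on $g$ is needed to make the $\Delta^{-}$ contributions vanish.
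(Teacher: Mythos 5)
Your proof is correct, and its skeleton is the same as the paper's: approximate $u$ uniformly by step functions $u_{n}$, observe that each $u_{n}\eta$ is Kurzweil--Stieltjes integrable w.r.t.\ $g$, and pass to the limit. The difference lies entirely in the limit step. Since $\|u_{n}-u\|_{\infty}\to 0$ and $\eta$ is bounded, the convergence $u_{n}\eta\to u\eta$ is \emph{uniform}, and the paper simply invokes the uniform convergence theorem for Kurzweil--Stieltjes integrals \cite[Theorem~6.3.8]{MST} to conclude in one line. You instead reach for the dominated convergence theorem (Theorem~\ref{DCT}), which forces you to manufacture the uniform bound on the mixed sums $\sum_{j}\int_{\sigma_{j-1}}^{\sigma_{j}}u_{m_{j}}\eta\,{\rm d}g$; this is where the detour through $P(t)=\int_{a}^{t}\eta\,{\rm d}g$ and the substitution identity comes from. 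That detour works, but it is not needed even within your own scheme: because $g$ is nondecreasing and $|u_{n}\eta|\leqslant MK$, every approximating sum for $\int_{c}^{d}u_{n}\eta\,{\rm d}g$ is bounded by $MK\bigl(g(d)-g(c)\bigr)$, so the estimate you want follows directly without introducing $P$ or proving $\int u_{n}\eta\,{\rm d}g=\int u_{n}\,{\rm d}P$. So: correct, same approach in essence, but you use a strictly stronger convergence theorem than necessary and pay for it with the substitution bookkeeping that you yourself flag as the delicate point; the uniform convergence theorem makes all of that evaporate.
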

\begin{proof} By \cite[Theorem~4.1.5]{MST} there exists a sequence
$(u_{n})_{n}$ of step functions which converges uniformly to $u$. It follows that each function
$u_{n}(\cdot) \eta(\cdot)$ is Kurzweil--Stieltjes integrable w.r.t. $g$. Since the sequence
$(u_{n} \eta)_{n}$  converges uniformly to $u \eta$, using \cite[Theorem~6.3.8]{MST} we conclude that the function
$u \eta$ is Kurzweil--Stieltjes integrable w.r.t. $g$.
\end{proof}
\begin{example}  \label{E1} \rm Consider the following   integral equation
\begin{equation}\label{exhmde1}
x(t)=\dfrac{1}{2}\sin^2(t)\ln(1+|x(t)|)+\int_{0}^{t}f(s,x(s)){\rm d}g(s), \; t\in[0,1],
\end{equation}
where $g$ is a nondecreasing left--continuous function, and $f \colon [0,1]\times\real\to\real$ is the function given
by $f(s, z) = \eta(s) e^{\gamma \cos(z)}$ for all $(s,z)\in [0,1]\times\real$, where  $\gamma>0$ is a constant and
$\eta:[0,1] \to \real^{+}$ is a bounded and   Kurzweil--Stieltjes integrable w.r.t. $g$ function.
It is clear that condition (A0) holds. Also, using Lemma~\ref{L5}, we obtain that condition (A1) holds.  Furthermore,
\[
\left| \int_{c}^{d} f(s, u)  {\rm d} g(s) \right| = \left| \int_{c}^{d} \eta(s) e^{\gamma \cos{u}}  {\rm d} g(s) \right|
\leqslant e^{\gamma} \int_{c}^{d} \eta (s)  {\rm d}g(s),
\]
for all $u \in \real$ and $[c,d] \subseteq[0,1]$, which shows  that  condition (A2) is fulfilled
with $M(s) = e^{\gamma} \eta(s) $ for $s \in[0,1]$.

In addition, let $h$ be the function given by $h(t, u)=\dfrac{1}{2}\sin^2(t) \ln(1+|u|)$ defined on $[0,1]\times\real$.
It is clear that $h$ is continuous, hence the function $h(\cdot,v)$ is regulated on $[0,1]$.  Moreover, for every $u, v \in \real$ we have
\begin{equation*}
|h(t,u)-h(t,v)| \leqslant\dfrac{1}{2} \left| \ln\left(\dfrac{(1+|u|)}{(1+|v|)}\right) \right|  \leqslant \vp(|u-v|),
\end{equation*}
where  $\vp(t)=\dfrac{1}{2}\ln(1+t)$ for $t\geqslant0$. Consequently, the function $h$ satisfies the condition (A3).

Since conditions (A0)--(A3) are fulfilled, and  $\displaystyle \liminf_{r \to \infty} \dfrac{\vp(r)}{r} = 0$, it follows
from Theorem~\ref{exist1} that the integral equation \eqref{exhmde1} has a solution defined on $[0,1]$.
\end{example}
Conditions (A2)  and (A3)  are very demanding since, said somewhat superficially, they tell us that the function $f$ is
uniformly bounded and that the function $h$ uniformly satisfies a Lipschitz--type property of continuity. However, these conditions can be generalized as follows.
\begin{description}
\item[(A2*)] There exists a function $M \colon J \times [0, +\infty) \to [0, +\infty)$ such that $M(\cdot, \al)$ is Kurzweil--Stieltjes integrable
w.r.t. $g$ for all $\al \geqslant 0$, the function $M(s, \cdot)$ is nondecreasing for all $s \in J$, and
\[
\left\|\int_{c}^{d} f(s, u) {\rm d}g(s) \right\| \leqslant \int_{c}^{d} M(s, \|u\|) {\rm d}g(s),
\]
for all $u \in \rn$, and $[c, d] \subseteq J$.
\item[(A3*)] For every $v \in \rn$, the function $h(\cdot, v)$ is regulated on $J$, and there exists a
function $\vp: \real^{+} \times \real^{+} \to \real^{+}$ such that for every $r \geqslant 0$, $\vp(\cdot, r)$ is a $\mathfrak{D}$--function satisfying
$\vp (t, r) < t$ for all $t > 0$,  and
\[
\|h(t, u) - h(t, v)\| \leqslant \vp\left(\|u -v\|, r \right)
\]
for all $u, v \in \rn$, $\|u\|, \|v\| \leqslant r$, and $t \in J$.
\end{description}
\begin{theorem}\label{T1} Assume that conditions {\rm(A0)}, {\rm(A1)}, {\rm(A2*)} and {\rm(A3*)} are fulfilled. Assume further that
\begin{equation}
\liminf_{r \to \infty} \left(\dfrac{\vp(r, r)}{r}+\frac{1}{r}  \int_{t_{0}}^{t_{0}+a} M(s, r)  {\rm d} g(s)\right) < 1. \label{equ3.1}
\end{equation}
Then there exists a solution for the HMDE \eqref{hmde1}--\eqref{hmde2} defined on $J$.
\end{theorem}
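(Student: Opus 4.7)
The plan is to imitate the Krasnoselskii-type argument used for Theorem~\ref{exist1}, with condition \eqref{equ3.1} replacing \eqref{phi} and with the radius-dependent bounds $\vp(\cdot, N)$ and $M(\cdot, N)$ playing the role that $\vp$ and $M$ played there, where $N$ is a suitably large constant. Applied with $v = 0$, condition (A3*) tells us that $h(\cdot, 0)$ is regulated, hence bounded, on the compact interval $J$, so $H_{0} := \sup_{t \in J} \|h(t, 0)\|$ is finite. Using \eqref{equ3.1}, I would pick $N > 0$ large enough that
\[
\vp(N, N) + \int_{t_{0}}^{t_{0} + a} M(s, N)\, {\rm d} g(s) + H_{0} + \|x_{0} - h(t_{0}, x_{0})\| < N.
\]
As before, set $S = \{x \in G(J, \rn) : x(t_{0}) = x_{0},\; \|x\|_{\infty} \leqslant N\}$, which is closed, convex and bounded, and define the operators $A$ and $B$ on $S$ by \eqref{A}--\eqref{B}.

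Next I would verify that $A$ is a nonlinear $\mathfrak{D}$--contraction on $S$. Since every $x, z \in S$ satisfies $\|x\|_{\infty}, \|z\|_{\infty} \leqslant N$, condition (A3*) with $r = N$ gives
\[
\|Ax(t) - Az(t)\| \leqslant \vp(\|x(t) - z(t)\|, N) \leqslant \vp(\|x - z\|_{\infty}, N), \quad t \in J,
\]
because $\vp(\cdot, N)$ is nondecreasing. Moreover $\vp(\cdot, N)$ is a $\mathfrak{D}$--function with $\vp(t, N) < t$ for $t > 0$, which matches Definition~\ref{nlc}. One also needs $Ax \in G(J, \rn)$ for $x \in S$; this follows exactly as in Remark~\ref{R1}, now using the $r$--dependent Lipschitz-type estimate of (A3*) with $r = N$ to transfer the regulated character of $h(\cdot, v)$ to $h(\cdot, x(\cdot))$.

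The third step is to show that $B$ is completely continuous on $S$. The key preliminary is that $s \mapsto f(s, x(s))$ is Kurzweil--Stieltjes integrable w.r.t.\ $g$ for each $x \in S$, and this is precisely where I expect the main technical obstacle. The proof of Lemma~\ref{L6} relied on the uniform bound $M(\cdot)$; with (A2*) the bound $M(\cdot, N)$ is still fixed once $N$ is fixed, so the argument of \cite{MS} carries over after restricting to $\|u\| \leqslant N$. Granted integrability, the uniform bound
\[
\left\| \sum_{j = 1}^{l} \int_{\sigma_{j-1}}^{\sigma_{j}} f(s, x_{m_{j}}(s))\, {\rm d} g(s) \right\| \leqslant \int_{t_{0}}^{t_{0} + a} M(s, N)\, {\rm d} g(s)
\]
for sequences $(x_{n})_{n} \subset S$ activates Theorem~\ref{DCT} and delivers the continuity of $B$. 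Uniform boundedness and equiregulatedness of $B(S)$ then follow exactly as in the proof of Theorem~\ref{exist1}, with $M(s, N)$ replacing $M(s)$ throughout and with the auxiliary function $p(t) = \int_{t_{0}}^{t} M(s, N)\, {\rm d} g(s)$, which is regulated by Theorem~\ref{thm1.2}.

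Finally, I would check hypothesis (c) of Theorem~\ref{FPT}: for every $u, v \in S$ we have $(Au + Bv)(t_{0}) = x_{0}$, and a triangle inequality together with (A3*), (A2*) and the choice of $N$ yield
\[
\|Au(t) + Bv(t)\| \leqslant \vp(N, N) + H_{0} + \|x_{0} - h(t_{0}, x_{0})\| + \int_{t_{0}}^{t_{0} + a} M(s, N)\, {\rm d} g(s) < N.
\]
Hence $Au + Bv \in S$, and Theorem~\ref{FPT} furnishes a fixed point $x = Ax + Bx$ in $S$, i.e.\ a solution of \eqref{hmde1}--\eqref{hmde2}. The hard part, as flagged above, is the integrability of $f(\cdot, x(\cdot))$ under the weaker assumption (A2*): every subsequent estimate rests on it, but it should follow by a straightforward adaptation of Lemma~\ref{L6}, since the bound $M(\cdot, N)$ is constant over the ball of radius $N$.
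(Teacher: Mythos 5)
Your proposal is correct and follows essentially the same route as the paper: the authors likewise only sketch Theorem~\ref{T1} by rerunning the Krasnoselskii argument of Theorem~\ref{exist1} with $\vp(\cdot,N)$ and $M(\cdot,N)$ in place of $\vp$ and $M$, choosing $N$ from \eqref{equ3.1} exactly as you do, and invoking Lemma~\ref{L6} and Theorem~\ref{DCT} for the complete continuity of $B$. The integrability point you flag is handled in the paper at the same level of detail (a direct appeal to (A2*) together with Lemma~\ref{L6}, valid since all functions in $S$ take values in the ball of radius $N$), so there is no substantive difference.
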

\begin{proof}
Since the proof follows the same lines as the one made for the Theorem~\ref{exist1}, we will present here only a brief sketch
of the proof.
\begin{itemize}
\item[{\rm (i)}] Using \eqref{equ3.1}, we {infer}  that there exists $N>0$ large enough such that
\begin{equation}
\vp(N, N) + H_{0} + \|x_{0} + h(t_{0},x_0)\| +  \int_{t_{0}}^{t_{0}+a} M(s,N)   {\rm d} g(s) <N. \label{equ3.6}
\end{equation}
We define $S = \{x \in G(J, \rn): x(t_{0}) = x_{0}, \; \|x\|_{\infty} \leqslant N\}$.
\item[{\rm (ii)}] As an immediate consequence of condition (A3*), we infer that the operator $A$ defined by \eqref{A} is a nonlinear
$\mathfrak{D}$--contraction with respect the $\mathfrak{D}$-function $\vp(\cdot, N)$  on $S$.
\item[{\rm (iii)}] In this step, we show that the operator $B$ defined by \eqref{B} is completely continuous.
Let $ (x_{n})_{n\in\mathbb{N}}$ be a sequence in $S$
that converges to $x$. Proceeding as in the proof of Theorem~\ref{exist1}, and as a consequence of (A2*), Lemma~\ref{L6} and 
 Theorem~\ref{DCT}, we obtain that  $B x_{n}(t) \to B x(t)$ as $n \to \infty$
uniformly for $t \in J$, which shows that $B$ is continuous.
Turning to apply {(A2*) and Lemma~\ref{L6}}, we infer that
\[
\left\|\int_{t_{0}}^{t} f(s, x(s))   {\rm d}g(s) \right\| \leqslant  \int_{t_{0}}^{t} M(s, N)   {\rm d}g(s) \leqslant 
\int_{t_0}^{t_0 + a} M(s, N){\rm d}g(s)
\]
for all $x \in S$. This implies that $B(S)$ is a bounded set. In addition, arguing as in the proof
of Theorem~\ref{exist1} we derive
that $B(S) \subset G(J, \mathbb R^n)$ is a relatively compact set.
\item[{\rm (iv)}] Let $u, v $ {be arbitrary functions} in $S$. Applying (A2*), (A3*), {Lemma~\ref{L6}}
and \eqref{equ3.6}, we can estimate
\begin{eqnarray*}
\|Au(t)+ B v(t)\| & \leqslant & \|h(t, u(t)) \| + \|x_{0} - h(t_{0}, x_{0})\| + \left\|\int_{t_{0}}^{t} f(s, v(s))  {\rm d}g(s) \right\| \\
& \leqslant &  \vp(N, N)+\|h(t,0)\|+ \|x_{0} - h(t_{0}, x_{0})\| +  \int_{t_{0}}^{t_{0} + a} M(s, N) {\rm d}g(s) \\
& \leqslant &  N
\end{eqnarray*}
for all $t \in J$, which implies that $Au+Bv\in S$.
\end{itemize}
As a consequence of steps (i)-(iv) we can affirm that all the hypotheses of Theorem~\ref{FPT} are satisfied, and thus this concludes the proof.
\end{proof}

\section{Differentiability and existence of $S$--asymptotically $\omega$--periodic solutions}

In this section, our {aim is to characterize the existence of special classes of solutions. In particular
we study the existence of continuous, differentiable and
 $S$--asymptotically $\omega$--periodic solutions for problem HMDE \eqref{hmde1}--\eqref{hmde2}.
We begin with a result that guarantees the existence of continuous solutions.
\begin{prop} \label{C2} Assume that conditions {\rm(A1), (A2)} and {\rm(A3*)} hold.
Assume further that  $g$ is continuous and
for every $v \in \rn$, the function $h(\cdot, v)$ is continuous. Then every  solution $x(\cdot)$ of problem HMDE
\eqref{hmde1}--\eqref{hmde2} is a  continuous function.
\end{prop}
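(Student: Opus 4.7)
The plan is to decompose the solution into a continuous part (the integral term) and a nonlinear part (the $h$ term), and then exploit the nonlinear contraction property in (A3*) to propagate continuity from the sum to $x$ itself. Write
\[
x(t) = c + h(t, x(t)) + p(t), \qquad c := x_{0} - h(t_{0}, x(t_{0})), \quad p(t) := \int_{t_{0}}^{t} f(s, x(s))\,{\rm d} g(s).
\]
Since $x$ is a solution, it is regulated, hence bounded by some $r > 0$ on $J$. By Lemma~\ref{L6}, which applies under (A1)--(A2) once we know $f(\cdot,x(\cdot))$ is Kurzweil--Stieltjes integrable, the function $p$ is well defined and regulated. Then Theorem~\ref{thm1.2} together with the continuity of $g$ (so $\Delta^{\pm} g \equiv 0$) gives $p(t^{+}) = p(t) = p(t^{-})$ for every $t$, i.e.\ $p$ is continuous on $J$.

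Next I would fix $t \in [t_{0}, t_{0}+a)$ and show $x(t^{+}) = x(t)$; the left limit is analogous. For $s \to t^{+}$, I would split
\[
\|h(s, x(s)) - h(t, x(t^{+}))\| \leqslant \|h(s, x(s)) - h(s, x(t^{+}))\| + \|h(s, x(t^{+})) - h(t, x(t^{+}))\|.
\]
The first term is bounded by $\vp(\|x(s) - x(t^{+})\|, r)$ via (A3*), and $\vp(\cdot, r)$ is a $\mathfrak{D}$--function, hence continuous at $0$; so it tends to $0$ as $s \to t^{+}$. The second term tends to $0$ by the assumed continuity of $h(\cdot, x(t^{+}))$. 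Therefore $h(s, x(s)) \to h(t, x(t^{+}))$ as $s \to t^{+}$.

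Passing to the limit $s \to t^{+}$ in the integral equation and comparing with the value at $t$ yields
\[
x(t^{+}) - x(t) = h(t, x(t^{+})) - h(t, x(t)).
\]
Applying (A3*) once more gives $\|x(t^{+}) - x(t)\| \leqslant \vp(\|x(t^{+}) - x(t)\|, r)$. If $x(t^{+}) \neq x(t)$, then $\|x(t^{+}) - x(t)\| > 0$ and the strict inequality $\vp(\tau, r) < \tau$ for $\tau > 0$ produces a contradiction. Hence $x(t^{+}) = x(t)$, and the symmetric argument handles $x(t^{-}) = x(t)$, giving continuity of $x$ on all of $J$. The main obstacle is essentially the first limit computation: one must be careful that the regularity of $h$ available is only separate continuity in $t$ plus the $\vp$--Lipschitz condition in the second variable, so the decomposition above is what makes the argument go through rather than requiring joint continuity of $h$.
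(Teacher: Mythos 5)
Your proof is correct, and it reaches the conclusion by a genuinely different (and slightly cleaner) mechanism than the paper. The paper argues by contradiction: it takes a sequence $t_{k} \to \overline{t}$ with $\|x(t_{k})-x(\overline{t})\| \geqslant \ve$, derives the estimate $\|x(t_{k})-x(\overline{t})\| \leqslant \vp(\|x(t_{k})-x(\overline{t})\|, r) + \ve_{k}$ with $\ve_{k} \to 0$ (using essentially your decomposition of the $h$-difference together with the continuity of the integral term coming from Theorem~\ref{thm1.2}), and then needs a compactness argument to extract a uniform gap $\delta \leqslant \tau - \vp(\tau,r)$ on $[\ve, 2r]$ before it can conclude. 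You instead use that a solution is regulated, so the one-sided limits $x(t^{\pm})$ exist a priori; passing to the limit in \eqref{hmde1} gives the jump identity $x(t^{+}) - x(t) = h(t,x(t^{+})) - h(t,x(t))$, and the strict inequality $\vp(\tau,r)<\tau$ applied to the single number $\tau = \|x(t^{+})-x(t)\|$ kills the jump with no uniformity argument required. Two minor remarks: the appeal to Lemma~\ref{L6} is unnecessary and slightly off, since that lemma also assumes (A0), which is not among the hypotheses here; integrability of $f(\cdot,x(\cdot))$ is already part of the definition of a solution, and continuity of $p$ then follows from Theorem~\ref{thm1.2} and the continuity of $g$ alone. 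Also, when invoking (A3*) with $u = x(s)$ and $v = x(t^{+})$, one should observe that $\|x(t^{+})\| \leqslant r$ because it is a limit of values bounded by $r$, so the restriction $\|u\|,\|v\|\leqslant r$ in (A3*) is indeed met.
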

\begin{proof} The proof of this statement is standard. For completeness of the text, we are going to include here an outline of it.
Let $x(\cdot)$ be a solution of   the problem HMDE \eqref{hmde1}--\eqref{hmde2} with $\|x\|_{\infty} \leqslant r$.
Suppose that $x(\cdot)$ is not continuous at
$\overline{t}$. Then there must be $\ve > 0$ and a sequence $(t_{k})_{k}$ that converges to $\overline{t}$ and such that
$\|x(t_{k}) - x(\overline{t})\| \geqslant \ve$ for all $k \in \mathbb{N}$.  It follows from expression \eqref{hmde1}
that
\begin{eqnarray*}
x(t_{k}) - x(\overline{t}) & = & h(t_{k}, x(t_{k})) - h(\overline{t}, x(\overline{t})) + \int_{t_{0}}^{t_{k}} f(s, x(s)) {\rm d}g(s)
- \int_{t_{0}}^{\overline{t}} f(s, x(s)) {\rm d}g(s) \\
& = & h(t_{k}, x(t_{k})) - h(t_{k}, x(\overline{t}) ) + h(t_{k}, x(\overline{t}) ) - h(\overline{t}, x(\overline{t}))
+ \int_{\overline{t}}^{t_{k}} f(s, x(s)) {\rm d}g(s)
\end{eqnarray*}
Without loss of generality, we assume that $t_{k} > \overline{t}$. From the previous decomposition it follows that
\begin{eqnarray}
\|x(t_{k}) - x(\overline{t})\| & \leqslant  & \|h(t_{k}, x(t_{k})) - h(\overline{t}, x(\overline{t}))\| + \left\| \int_{t_{0}}^{t_{k}} f(s, x(s)) {\rm d}g(s)
- \int_{t_{0}}^{\overline{t}} f(s, x(s)) {\rm d}g(s)\right\| \nonumber \\
& \leqslant  & \|h(t_{k}, x(t_{k})) - h(t_{k}, x(\overline{t}) ) \|+  \|h(t_{k}, x(\overline{t}) ) - h(\overline{t}, x(\overline{t})) \|
+ \left\| \int_{\overline{t}}^{t_{k}} f(s, x(s)) {\rm d}g(s) \right\| \nonumber \\
&\leqslant & \vp(\|x(t_{k}) -  x(\overline{t})\|, r) + \|h(t_{k}, x(\overline{t}) ) - h(\overline{t}, x(\overline{t})) \| +
\left\| \int_{\overline{t}}^{t_{k}} f(s, x(s)) {\rm d}g(s) \right\|. \label{equ4.2}
\end{eqnarray}
Since $h(\cdot, x(\overline{t}))$ is continuous, then $\|h(t_{k}, x(\overline{t}) ) - h(\overline{t}, x(\overline{t})) \| \to 0$ as $k \to \infty$,
and, using Theorem~\ref{thm1.2}, ${\displaystyle \int_{\overline{t}}^{t_{k}} f(s, x(s)) {\rm d}g(s) \to 0}$ as $k \to \infty$.
Since $\vp([\ve, 2r], r)$ is a compact set, we can choose $\delta > 0$ such that $\delta \leq t - \vp(t, r)$ for all $t \in [\ve, 2r]$. Using that
$ \|x(t_{k}) -  x(\overline{t})\| \in [\ve, 2r]$, and applying the estimate  \eqref{equ4.2}, we obtain
\[
\|x(t_{k}) - x(\overline{t})\| \leqslant \|x(t_{k}) -  x(\overline{t})\| - \delta +
\|h(t_{k}, x(\overline{t}) ) - h(\overline{t}, x(\overline{t})) \| +
\left\| \int_{\overline{t}}^{t_{k}} f(s, x(s)) {\rm d}g(s) \right\|
\]
which implies that
\[
\delta \leqslant \|h(t_{k}, x(\overline{t}) ) - h(\overline{t}, x(\overline{t})) \| +
\left\| \int_{\overline{t}}^{t_{k}} f(s, x(s)) {\rm d}g(s) \right\| \to 0, \; k \to \infty.
\]
This contradiction shows that $x(\cdot)$ is continuous at $\overline{t}$.
\end{proof}}
Next we study the differentiability of a solution of problem HMDE \eqref{hmde1}--\eqref{hmde2}. Before stating our result,
we will introduce a notation. Assume that $h \colon [t_{0}, t_{0} + a] \times \rn \to \rn$ is differentiable and
$h(t, x) =col (h_{1}, \ldots, h_{n})$. We denote $J_{x}h(p)$ to the Jacobian matrix
\[
J_{x}h(p) =  \left( \begin{array}{cccc}
\frac{\partial h_{1} (p)}{\partial x_{1}} & \frac{\partial h_{1} (p)}{\partial x_{2}} & \cdots & \frac{\partial h_{1} (p)}{\partial x_{n}} \\
& & & \\
\frac{\partial h_{2} (p)}{\partial x_{1}} & \frac{\partial h_{2} (p)}{\partial x_{2}} & \cdots & \frac{\partial h_{2} (p)}{\partial x_{n}} \\
\vdots & \vdots & & \vdots \\
\frac{\partial h_{n} (p)}{\partial x_{1}} & \frac{\partial h_{n} (p)}{\partial x_{2}} & \cdots & \frac{\partial h_{n} (p)}{\partial x_{n}}
\end{array} \right).
\]
Furthermore,
\[
h(t, x) = h(t_{0}, x_{0}) + \frac{\partial h(t_{0}, x_{0}) }{\partial t} (t - t_{0}) + J_{x}h(t_{0}, x_{0}) (x - x_{0}) + R(t, x, t_{0}, x_{0})
\]
where ${\displaystyle \lim_{t \to t_{0}, x \to x_{0}} \frac{R(t, x, t_{0}, x_{0})}{\|(t, x) - (t_{0}, x_{0})\|} = 0}$. 

On the other hand, for a function $u(\cdot)$ defined on an interval $J$, we denote by $u_{+}^{\prime}(t)$ (and $u_{-}^{\prime}(t)$) the right (respectively, left) derivative of $u$ at $t$, when these derivatives exist.

\begin{prop} \label{C3} Assume that conditions {\rm(A1), (A2)} and {\rm(A3*)} hold.
Assume further that  $h$ is continuously differentiable, with $I - J_{x}h(t, x(t))$ invertible, and that $f$ and $g$ are
continuous functions. Let $x(\cdot)$ be a solution of problem
HMDE \eqref{hmde1}--\eqref{hmde2}.
Then  $x(\cdot)$ satisfies the following properties:
\begin{itemize}
\item[(a)] The function $x(\cdot)$ is differentiable a.e. for $t \in [t_{0}, t_{0} + a]$.
\item[(b)] For each $t \in [t_{0}, t_{0} + a)$ such that $g_{+}^{\prime}(t)$ exists, there is also the right derivative $x_{+}^{\prime}(t)$.
Similarly, for each $t \in (t_{0}, t_{0} + a]$ such that $g_{-}^{\prime}(t)$ exists, there is also the left derivative $x_{-}^{\prime}(t)$.
\item[(c)] If $g$ is differentiable at $t$, then the function $x(\cdot)$ is also differentiable at  $t$.
\item[(d)] If $g$ is continuously differentiable at $t$, then the function $x(\cdot)$ is also continuously differentiable at  $t$.
\end{itemize}
\end{prop}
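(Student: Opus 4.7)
The plan is to reduce the differentiability of $x(\cdot)$ to that of the Kurzweil--Stieltjes integral by peeling off the nonlinear term $h$. Because $g$ is continuous and $h(\cdot,v)$ is continuous (being $C^{1}$), Proposition~\ref{C2} applies and $x(\cdot)$ is continuous on $J$. Setting $c:=x_{0}-h(t_{0},x_{0})$ and
\[
\Phi(t):=\int_{t_{0}}^{t}f(s,x(s))\,{\rm d}g(s),
\]
equation \eqref{hmde1} reads $F(t,x(t))=c+\Phi(t)$, where $F(t,y):=y-h(t,y)$. The standing assumption that $I-J_{x}h(t,x(t))$ is invertible is precisely the invertibility of $\partial_{y}F(t,x(t))$, so once the (one-sided) differentiability of $\Phi$ is in hand, the implicit function theorem will transfer it to $x$.

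The second step is to show that $\Phi$ inherits the one-sided differentiability behavior of $g$. Since $f$ and $x$ are continuous, the integrand $\psi(s):=f(s,x(s))$ is continuous on $J$; and since $g$ is nondecreasing, $\mathrm{var}_{t}^{t+h}(g)=g(t+h)-g(t)$ for $h>0$. Theorem~\ref{thm2.1} therefore yields
\[
\left\|\int_{t}^{t+h}\psi(s)\,{\rm d}g(s)-\psi(t)\bigl(g(t+h)-g(t)\bigr)\right\|\leqslant \sup_{s\in[t,t+h]}\|\psi(s)-\psi(t)\|\,\bigl(g(t+h)-g(t)\bigr).
\]
Dividing by $h$ and using the continuity of $\psi$ at $t$ together with the existence of $g_{+}^{\prime}(t)$ gives $\Phi_{+}^{\prime}(t)=f(t,x(t))\,g_{+}^{\prime}(t)$; the symmetric argument on the left yields $\Phi_{-}^{\prime}(t)=f(t,x(t))\,g_{-}^{\prime}(t)$ whenever $g_{-}^{\prime}(t)$ exists. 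In particular, if $g^{\prime}(t)$ exists then so does $\Phi^{\prime}(t)$, and if $g$ is $C^{1}$ on a neighborhood of $t$ then so is $\Phi$, since $\Phi^{\prime}=\psi\cdot g^{\prime}$ is continuous there.

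Finally, fix $t_{*}\in J$. The map $F$ is $C^{1}$ and $\partial_{y}F(t_{*},x(t_{*}))=I-J_{x}h(t_{*},x(t_{*}))$ is invertible, so the implicit function theorem furnishes a $C^{1}$ map $G$ on a neighborhood of $(t_{*},c+\Phi(t_{*}))$ such that $F(t,y)=z$ is locally equivalent to $y=G(t,z)$; by continuity of $x$ and $\Phi$, one has $x(t)=G(t,c+\Phi(t))$ near $t_{*}$. Implicit differentiation of $F(t,G(t,z))=z$ gives $\partial_{z}G=(I-J_{x}h)^{-1}$ and $\partial_{t}G=(I-J_{x}h)^{-1}\partial_{t}h$, so the one-sided chain rule produces
\[
x_{\pm}^{\prime}(t_{*})=\bigl(I-J_{x}h(t_{*},x(t_{*}))\bigr)^{-1}\bigl[\partial_{t}h(t_{*},x(t_{*}))+f(t_{*},x(t_{*}))\,g_{\pm}^{\prime}(t_{*})\bigr]
\]
whenever the relevant one-sided derivative of $g$ exists, which establishes (b), (c) and (d). For (a), note that $g$ is nondecreasing on $[t_{0},t_{0}+a]$ and hence differentiable almost everywhere by Lebesgue's differentiation theorem for monotone functions, so (c) applies a.e.\ on $J$. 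The main technical point I expect to require care is the limit passage in the above estimate for $\Phi$, where the oscillation of $\psi$ must be controlled uniformly with respect to the Stieltjes measure $\mathrm{d}g$; once this is settled, the implicit function theorem together with the one-sided chain rule makes the remainder of the argument routine.
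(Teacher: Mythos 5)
Your argument is correct, and it reaches the same derivative formula as the paper, namely $x_{\pm}^{\prime}(t)=\left(I-J_{x}h(t,x(t))\right)^{-1}\left(\partial_{t}h(t,x(t))+f(t,x(t))\,g_{\pm}^{\prime}(t)\right)$, but by a genuinely different route in the key step. The paper works directly with the difference quotient: it expands $h$ to first order with a Peano remainder $R$, solves the resulting identity algebraically for $\frac{x(t+\Delta)-x(t)}{\Delta}$ using the invertibility of $I-J_{x}h(t,x(t))$, and lets $\Delta\to 0$; for the integral term it simply cites that the Kurzweil--Stieltjes integral reduces to a Riemann--Stieltjes integral under the continuity hypotheses, whose differentiation property is classical. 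You instead rewrite the equation as $F(t,x(t))=c+\Phi(t)$ with $F(t,y)=y-h(t,y)$ and invoke the implicit function theorem to get $x=G(t,c+\Phi(t))$ with $G$ of class $C^{1}$, then apply a one-sided chain rule; and you prove the one-sided differentiability $\Phi_{\pm}^{\prime}(t)=f(t,x(t))g_{\pm}^{\prime}(t)$ from scratch via the oscillation estimate supplied by Theorem~\ref{thm2.1}. Your version buys two things: a self-contained treatment of the integral term, and --- more substantively --- it avoids the delicate point in the paper's limit passage, where concluding $R(t+\Delta,x(t+\Delta),t,x(t))/\Delta\to 0$ from $R/\|(t+\Delta,x(t+\Delta))-(t,x(t))\|\to 0$ tacitly requires a prior bound on $\|x(t+\Delta)-x(t)\|/|\Delta|$; the local Lipschitz character of the $C^{1}$ map $G$ handles this automatically. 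The paper's approach is shorter and avoids setting up the implicit function theorem. Both yield (a) from Lebesgue's theorem on a.e.\ differentiability of the nondecreasing function $g$, and (d) from continuity of the right-hand side of the derivative formula.
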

\begin{proof} It follows from Corollary~\ref{C2} that $x(\cdot)$ is a continuous function. We begin the proof with a simple remark. Let
${\displaystyle u(t) = \int_{t_{0}}^{t} f(s, x(s)) {\rm d} g(s)}$. Assume initially that the derivative $g^{\prime}(t)$ exists.
Then the derivative $u^{\prime}(t)$ also exists and $u^{\prime}(t)= f(t, x(t)) g^{\prime}(t) $. In fact, under our hypotheses, the
Kurzweil-Stieltjes integral   ${\displaystyle  \int_{t_{0}}^{t} f(s, x(s)) {\rm d} g(s)}$
coincides with the Riemann-Stieltjes integral of $f(s, x(s))$ w.r.t. $g$ (\cite[Theorem~6.2.12]{MST}) and the assertion is a well known
property  for the Riemann-Stieltjes integral.
It follows from \eqref{hmde1}  that
\[
\frac{x(t + \Delta) - x(t)}{\Delta}= \frac{\partial h(t, x(t)) }{\partial t}  + J_{x}h(t, x(t)) \frac{x(t + \Delta) - x(t)}{\Delta} +
\frac{R(t + \Delta, x(t + \Delta), t, x(t))}{\Delta} + \frac{u(t+\Delta) - u(t)}{\Delta},
\]
which implies that
\[
\frac{x(t + \Delta) - x(t)}{\Delta}= \left(I-  J_{x}h(t, x(t)) \right)^{-1}
\left(\frac{\partial h(t, x(t)) }{\partial t}  +  \frac{u(t+\Delta) - u(t)}{\Delta} + \frac{R(t + \Delta, x(t + \Delta), t,x(t))}{\Delta}\right).
\]
Hence, taking the limit above as $\Delta \to 0$, and using the previous remark, we obtain
\begin{equation}
x^{\prime}(t) = \left(I-  J_{x}h(t, x(t)) \right)^{-1} \left(\frac{\partial h(t, x(t)) }{\partial t}  +  f(t, x(t)) g^{\prime}(t) \right). \label{equ3.9}
\end{equation}
Since $g$ is a nondecreasing function, the derivative $g^{\prime}(t)$  exists a.e. Thus, the assertion (a) is an immediate
consequence of the  preceding development.  Claims (b) and (c) are similarly demonstrated. Finally, statement (d) is a consequence
of identity \eqref{equ3.9}.
\end{proof}
We next are concerned with  the existence of solutions on the interval $[0, \infty)$.
Taking into account that our later objective is to establish the existence of approximately periodic solutions, {initially}
we will restrict  ourselves to investigate the existence of bounded solutions on $[0, \infty)$. We denote by $G_{b}([0,\infty), \rn)$ the space consisting of all bounded regulated functions $u: [0,\infty) \to \rn$ endowed with the norm of uniform convergence. We slightly modify condition (A1), as follows:
\begin{description}
\item[(A1*)] For every {$u  \in   \rn$ and every $a > 0$, the function $f(\cdot, u)$} is Kurzweil--Stieltjes integrable w.r.t. $g$
on the interval $[0, a]$.
\end{description}

{Furthermore, from here on, the condition (A3*) should be understood as follows: for every $v \in \rn$, the function $h(\cdot, v)$ belongs to $G_{b}([0,\infty),\rn)$ and there exists a function $\vp: \real^{+} \times \real^{+} \to \real^{+}$ such that for every $r \geqslant 0$, $\vp(\cdot, r)$ is a $\mathfrak{D}$--function satisfying
$\vp (t, r) < t$ for all $t > 0$,  and
\[
\|h(t, u) - h(t, v)\| \leqslant \vp\left(\|u -v\|, r \right)
\]
for all $u, v \in \rn$, $\|u\|, \|v\| \leqslant r$, and $t \geqslant0$.}

As usual, we write ${\displaystyle  \int_{0}^{\infty} M(s, N) {\rm d} g  = \lim_{t \to \infty} \int_{0}^{t} M(s, N) {\rm d} g}$.
\begin{theorem} \label{thm3.2} Assume that conditions {\rm(A0), (A1*), (A2*)} and {\rm(A3*)} hold. Suppose further that
\begin{equation}
\liminf_{N \to \infty} \frac{1}{N} \left(  \sup_{0  \leqslant t < \infty, \|u\| \leqslant N} \|h(t, u)\| +
 \int_{0}^{\infty} M(s, N) {\rm d} g \right) < 1. \label{equ3.3}
\end{equation}
Then there exists a bounded solution for problem HMDE \eqref{hmde1}--\eqref{hmde2} on $[0, \infty)$.
\end{theorem}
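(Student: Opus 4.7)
The plan is to apply the Krasnoselskii--type fixed--point theorem (Theorem~\ref{FPT}) in the Banach space $G_{b}([0,\infty), \rn)$, mirroring the strategy used in the proofs of Theorems~\ref{exist1} and~\ref{T1}. Using the liminf condition \eqref{equ3.3}, I would first select $N > 0$ large enough so that
\[
\sup_{t \geqslant 0,\, \|u\| \leqslant N} \|h(t,u)\| + \|x_{0} - h(t_{0}, x_{0})\| + \int_{0}^{\infty} M(s, N)\, {\rm d}g(s) \;<\; N,
\]
and define $S = \{x \in G_{b}([0,\infty), \rn): x(t_{0}) = x_{0},\, \|x\|_{\infty} \leqslant N\}$, which is a closed, convex, bounded subset. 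The operators $A, B: S \to G_{b}([0,\infty), \rn)$ are defined as in \eqref{A} and \eqref{B}, now for all $t \geqslant t_{0}$.

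That $A$ is a nonlinear $\mathfrak{D}$--contraction with respect to $\vp(\cdot, N)$ is immediate from (A3*). The verification that $Au + Bv \in S$ for all $u, v \in S$ follows by the same estimate as in step~(iv) of the proof of Theorem~\ref{T1}, using the choice of $N$ together with (A2*) and (A3*). Continuity of $B$ in the sup--norm on $S$ is handled by the dominated convergence theorem for the Kurzweil--Stieltjes integral (Theorem~\ref{DCT}) applied on each finite interval $[t_{0},T]$, combined with the uniform smallness of the tail $\int_{T}^{\infty} M(s, N)\, {\rm d}g(s)$ for large $T$, which is available since \eqref{equ3.3} implicitly forces $\int_{0}^{\infty} M(s,N)\,{\rm d}g(s) < \infty$.

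The main obstacle is proving that $B(S)$ is a relatively compact subset of $G_{b}([0,\infty), \rn)$, since Lemma~\ref{L1} is stated only for compact intervals. I would use (and, if needed, establish) an Arzel\`a--Ascoli--type characterization for $G_{b}([0,\infty), \rn)$: a pointwise bounded family $\mathcal{F}$ is relatively compact provided it is equiregulated on every compact subinterval $[t_{0}, T]$ and \emph{uniformly Cauchy at infinity}, meaning that for every $\varepsilon > 0$ there exists $T > 0$ such that $\|u(t) - u(s)\| < \varepsilon$ for all $s, t \geqslant T$ and all $u \in \mathcal{F}$. For $B(S)$, equiregulatedness on $[t_{0}, T]$ follows as in Theorem~\ref{exist1} from
\[
\|Bx(t_{2}) - Bx(t_{1})\| \leqslant \int_{t_{1}}^{t_{2}} M(s, N)\, {\rm d}g(s) = p(t_{2}) - p(t_{1}), \qquad t_{0} \leqslant t_{1} < t_{2} \leqslant T,
\]
with $p(t) = \int_{t_{0}}^{t} M(s, N)\, {\rm d}g(s)$ regulated by Theorem~\ref{thm1.2}. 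The same estimate, combined with the finiteness of $\int_{0}^{\infty} M(s, N)\, {\rm d}g(s)$, gives the uniform Cauchy property at infinity by choosing $T$ so that $p(\infty) - p(T) < \varepsilon$. Pointwise boundedness is clear from the estimate already used in step~(iii) of Theorem~\ref{T1}. With these three properties in hand, $B(S)$ is relatively compact; all hypotheses of Theorem~\ref{FPT} are then verified, producing a fixed point of $A + B$ in $S$, which is the desired bounded solution on $[0, \infty)$.
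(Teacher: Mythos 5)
Your argument is correct, but it follows a genuinely different route from the paper. The paper does \emph{not} run the fixed--point argument on the half--line: it first invokes Theorem~\ref{T1} to solve the problem on $[0,1]$, then inductively solves on each $[k,k+1]$ with initial datum $x^{k}(k)$ and glues the pieces into a solution on $[0,\infty)$; boundedness is then proved \emph{a posteriori} by contradiction, setting $N(t)=\sup_{0\leqslant s\leqslant t}\|x(s)\|$ and showing that unboundedness of $N(t)$ is incompatible with \eqref{equ3.3}. You instead apply Theorem~\ref{FPT} directly in $G_{b}([0,\infty),\rn)$, which forces you to supply an ingredient the paper never needs, namely a sup--norm Arzel\`a--Ascoli criterion on the unbounded interval (Lemma~\ref{L1} is only stated for compact intervals, and the paper's half--line compactness arguments in Theorem~\ref{thm3.3} are carried out only for the Fr\'echet topology). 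Your proposed criterion --- pointwise boundedness, equiregulatedness on compacts, and uniform smallness of oscillation beyond some $T$ --- is valid (a diagonal extraction gives uniform convergence on compacts, and the tail condition upgrades this to uniform convergence on $[0,\infty)$), and the tail estimate $\int_{T}^{\infty}M(s,N)\,{\rm d}g\to 0$ is indeed available because \eqref{equ3.3} forces $\int_{0}^{\infty}M(s,N)\,{\rm d}g<\infty$ for the chosen $N$. What your route buys is that existence and boundedness come in one stroke and the solution lands directly in the ball of radius $N$; what the paper's route buys is that it reuses only the compact--interval machinery already established, at the cost of a separate gluing construction and a contradiction argument for boundedness. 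Two small points to make explicit if you write this up: you must verify that $A$ and $B$ actually map $S$ into $G_{b}([0,\infty),\rn)$ (regulatedness of $h(\cdot,x(\cdot))$ via (A3*) and of $\int_{0}^{\cdot}f(s,x(s))\,{\rm d}g$ via Theorem~\ref{thm1.2}, plus the uniform bounds), and the passage from pointwise to uniform convergence of $Bx_{n}$ on compacts requires the equiregulatedness of $\{Bx_{n}\}$ --- the same implicit step the paper uses in Theorem~\ref{exist1}.
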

\begin{proof} It follows from Theorem \ref{T1} that problem HMDE \eqref{hmde1}--\eqref{hmde2} has a solution on every
interval of type $[t_{0}, t_{0} +a]$.
Using this observation, we will carry out an inductive construction to prove the existence of solution on $[0, \infty)$.
Let us consider the intervals of the form $[k-1, k]$ for $k \in \mathbb N$. Let $k = 1$. By Theorem~\ref{T1}, we know that there exists a solution,
denoted by $x^{1}(\cdot)$, of  problem HMDE \eqref{hmde1}--\eqref{hmde2} defined on the interval $[0, 1]$.
Suppose now that $x^{k}$ for $k \in \mathbb{N}$ is a solution of problem  HMDE \eqref{hmde1}--\eqref{hmde2} on $[k -1, k]$.
Proceeding as in the definition of $x^{1}(\cdot)$, let $x^{k +1}(\cdot)$ be a solution of the problem
\begin{eqnarray*}
x(t) & = &  x(k) + h(t, x(t)) - h(k, x(k)) + \int_{k}^{t} f(s, x(s)){\rm d}g(s), \; k \leqslant t \leqslant k +1, \\
x(k)& = & x^{k}(k) \in \rn.
\end{eqnarray*}
Let us define the function $x(t) = x^{k}(t)$ for $t \in [k-1, k]$. It is clear that $x(\cdot)$ is a solution of problem
HMDE \eqref{hmde1}--\eqref{hmde2} on $[0, \infty)$.

Assume now that $x(\cdot)$ is not bounded on $[0, \infty)$. Let $N(t)  = \displaystyle \sup_{0 \leqslant s \leqslant t} \|x(s)\|$.
It is clear that $N(t) \to \infty$ as $t \to \infty$. Hence, we can select $t \geqslant t_{1}$ such that $N(t) \geqslant N_{1}>0$.
We can assume that ${\displaystyle \frac{1}{N_{1}} \|x_{0} - h(t_{0}, x_{0})\| \leqslant \ve/2}$, for $\varepsilon>0$ given.
Since $x$ is a solution, it follows that
\begin{eqnarray*}
\|x(t)\| & \leqslant & \|x_{0} - h(t_{0}, x_{0})\| + \|h(t, x(t)) \|+ \left\| \int_{0}^{t} f(s,x(s))  {\rm d} g(s) \right\| \\
& \leqslant &  \|x_{0} - h(t_{0}, x_{0})\| + \sup_{0 \leqslant \xi \leqslant t, \|u\| \leqslant N(t)} \|h(\xi, u) \| + \int_{0}^{t} M(s, N(t))
{\rm d} g(s).
\end{eqnarray*}
Hence, using \eqref{equ3.3} we obtain that
\[
1 \leqslant \frac{1}{N(t)} \|x_{0} - h(t_{0}, x_{0})\| + \frac{1}{N(t)} \sup_{\xi < \infty, \|u\| \leqslant N(t)} \|h(\xi, u) \| +
\frac{1}{N(t)} \int_{0}^{t} M(s, N(t)) {\rm d} g(s) < 1- \ve/2
\]
which is a contradiction. Thus, we complete the proof that $x(\cdot)$ is a bounded solution of problem
HMDE \eqref{hmde1}--\eqref{hmde2} on $[0, \infty)$.
\end{proof}
In the remaining of this section, we will study the existence of a type of solution that resembles a $\omega$--periodic function.
In \cite{HPT1,HPT2}, it was introduced the concept of S--asymptotically $\om$--periodic functions
for bounded continuous functions defined on $[0, \infty)$,
and the authors studied the main properties of this class of functions. In what follows, we will extend the definition
considered in those works to include regulated functions.
\begin{definition} \label{D1} A bounded regulated function $x : [0, \infty) \to X$ is called \emph{S--asymptotically $\om$--periodic}
if there exists $\om>0$ such that
\[
x(t + \om) - x(t) \to 0, \; t \to \infty.
\]
\end{definition}
{Before establishing the concepts necessary to study this topic}, inspired by \cite[Example~3.2]{HPT1}, we exhibit an example
to illustrate the concept of S--asymptotically $\om$--periodic for regulated functions.
\begin{example} \label{E3} Let $(a_{n})_{n\in\mathbb{N}_0} \subset \real $ be a bounded sequence of real numbers { consisting} of numbers different
from each other and such that $a_{n +1} - a_{n} \to 0$ as $n \to \infty$.
Let $x: [0, \infty) \to \mathbb R$ be the function given  by
\[
x(t) =\begin{cases}
a_1 + (a_1 - a_0)(t-1), & \text{if} \  0 \leqslant t \leqslant 1,\\
a_{n+1} + (a_{n+1} - a_{n-1})(t-n-1), & \text{if} \ n < t \leqslant n+1, \; n\in\mathbb{N}.
\end{cases}
\]
Clearly the function $x$ is continuous on every subinterval of type $(n,n+1)$ with $n\in\mathbb{N}_0$
and left--continuous at every $t=n \in\mathbb{N}$. Moreover, $x(n)=a_{n}$ and
$\Delta^{+}x(n)=a_{n-1}-a_{n}$ for all $n\in\mathbb{N}$. Consequently,  $x(\cdot)$ is a regulated
function with a countable  set of discontinuities on $[0,\infty)$ and $x(\cdot)$ is not periodic. The
boundedness of $x(\cdot)$ follows
from the properties of the sequence $(a_{n})_{n\in\mathbb{N}}$.
In addition, for every $t\in (n,n+1]$ we obtain
\[
|x(t+1)-x(t)|\leqslant 2|a_{n+2} - a_{n+1}|+|a_{n} - a_{n-1}|.
\]
Therefore, $\displaystyle\lim_{t\to\infty}[x(t+1)-x(t)]=0$, which shows that the function $x$ is $S$--asymptotically $1$--periodic.
\end{example}
We denote by $S_{\om}(X)$ the space consisting of all  S--asymptotically $\om$--periodic functions from $[0, \infty)$ to $X$. It is clear that
$S_{\om}(X)$ is a Banach space provided with the norm of uniform convergence.
\begin{definition} \label{D2} Let  $p : [0, \infty) \times \rn \to \rn $ be a function that satisfies the following conditions:
\begin{itemize}
\item[(i)] For every $u \in \rn$, the function $p(\cdot, u)$ is bounded and regulated.
\item[(ii)] For every $t \geqslant 0$, the function $p(t, \cdot)$ is continuous.
\end{itemize}
We say that $p$ is \emph{uniformly S--asymptotically $\om$--periodic on bounded sets} if there exists $\om>0$ such that
\[
p(t + \om, u) - p(t, u) \to 0, \; t \to \infty,
\]
uniformly for $u$ in bounded subsets of $\rn$.
\end{definition}
To establish our result on the existence of S--asymptotically $\om$--periodic solutions of problem HMDE \eqref{hmde1}--\eqref{hmde2},
we need to develop some previous properties.
\begin{lemma} \label{L2} Let $x \in S_{\om}(\rn)$. Assume that $p$ is a uniformly S--asymptotically $\om$--periodic
on bounded sets function that satisfies condition {\rm(A3*)}. Then the function $t \mapsto p(t, x(t))$ is
S--asymptotically $\om$--periodic.
\end{lemma}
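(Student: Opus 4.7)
The plan is to verify the two defining properties of $S_\omega(\rn)$ for the function $w(t) := p(t,x(t))$: namely, that $w$ is a bounded regulated function on $[0,\infty)$, and that $w(t+\om) - w(t) \to 0$ as $t \to \infty$. Let $r := \|x\|_\infty$, which is finite since $x \in S_\om(\rn) \subset G_b([0,\infty),\rn)$.

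First I would show $w \in G_b([0,\infty),\rn)$. Boundedness follows from the triangle inequality and (A3*): $\|p(t,x(t))\| \leqslant \|p(t,x(t))-p(t,0)\|+\|p(t,0)\| \leqslant \vp(r,r) + \sup_{t\geqslant 0}\|p(t,0)\|$, where the last supremum is finite because $p(\cdot,0)$ is bounded by Definition~\ref{D2}(i). To see that $w$ is regulated, fix $t_{0} \in (0,\infty)$ and a sequence $t_n \to t_{0}^{-}$. Since $x$ is regulated, $x(t_n) \to x(t_{0}^{-})$, and by (A3*) applied pointwise in $t$,
\[
\|p(t_n,x(t_n)) - p(t_n,x(t_{0}^{-}))\| \leqslant \vp(\|x(t_n)-x(t_{0}^{-})\|,r) \to 0,
\]
using that $\vp(\cdot,r)$ is a $\mathfrak{D}$--function and hence continuous at $0$. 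On the other hand, the function $p(\cdot,x(t_{0}^{-}))$ is regulated by Definition~\ref{D2}(i), so $p(t_n,x(t_{0}^{-}))$ converges to the left limit of $p(\cdot,x(t_{0}^{-}))$ at $t_{0}$. Combining the two, $w(t_n)$ converges; an analogous argument handles right limits.

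Next I would establish the asymptotic periodicity by splitting
\[
w(t+\om) - w(t) = \bigl[p(t+\om,x(t+\om)) - p(t+\om,x(t))\bigr] + \bigl[p(t+\om,x(t)) - p(t,x(t))\bigr].
\]
For the first bracket, (A3*) gives the bound $\vp(\|x(t+\om)-x(t)\|,r)$, which tends to $0$ as $t \to \infty$ because $x \in S_\om(\rn)$ and $\vp(\cdot,r)$ is continuous at $0$ with $\vp(0,r)=0$. For the second bracket, the set $\{x(t) : t \geqslant 0\}$ is contained in the closed ball of radius $r$, and by Definition~\ref{D2} the convergence $p(s+\om,u) - p(s,u) \to 0$ as $s \to \infty$ is uniform for $u$ in bounded subsets of $\rn$; applying this uniformity on that ball, the second bracket also tends to $0$.

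The routine parts are the decomposition and the triangle inequality; the step that requires a little care is showing $w$ is regulated, since it mixes two different kinds of continuity (the $\mathfrak{D}$--function modulus from (A3*) in the spatial slot, and the mere regulatedness of $p(\cdot,v)$ in the time slot), but the uniformity in $t$ of the estimate in (A3*) makes the decomposition $p(t_n,x(t_n)) = [p(t_n,x(t_n))-p(t_n,x(t_{0}^{\pm}))] + p(t_n,x(t_{0}^{\pm}))$ work cleanly. No further ingredients beyond (A3*), Definition~\ref{D2}, and the definition of $S_\om(\rn)$ are needed.
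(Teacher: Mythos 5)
Your proof is correct and the key step --- the three-term decomposition of $p(t+\om,x(t+\om))-p(t,x(t))$, with one difference controlled by $\vp(\cdot,\|x\|_\infty)$ via (A3*) and the other by the uniform S--asymptotic $\om$--periodicity of $p$ on the ball of radius $\|x\|_\infty$ --- is essentially identical to the paper's argument (the paper merely inserts the middle term $p(t,x(t+\om))$ where you insert $p(t+\om,x(t))$). Your additional verification that $t\mapsto p(t,x(t))$ is bounded and regulated is a point the paper's proof silently omits, and your argument for it is sound.
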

\begin{proof} From the decomposition
\[
p(t + \om, x(t + \om)) - p(t, x(t)) =  p(t + \om, x(t + \om)) - p(t , x(t + \om)) + p(t, x(t + \om)) - p(t, x(t)),
\]
we infer that
\begin{align*}
\|p(t + \om, x(t + \om)) - p(t, x(t))\|  \leqslant &
   \|p(t + \om, x(t + \om)) - p(t , x(t + \om))\| + \|p(t, x(t + \om)) - p(t, x(t))\| \\
 \leqslant &   \|p(t + \om, x(t + \om)) - p(t , x(t + \om))\| + \vp(\|x(t + \om) -  x(t) \|, \|x\|_{\infty}).
\end{align*}
Since $\|p(t + \om, x(t + \om)) - p(t , x(t + \om))\| \to 0$ as $t \to \infty$, because $p$ is
uniformly S--asymptotically $\om$--periodic
on bounded sets function, and $\vp(\|x(t + \om) -  x(t) \|, \|x\|_{\infty}) \to 0$ as $t \to \infty$ by the
properties of $\vp(\cdot, \|x\|_{\infty})$, we conclude
that $\|p(t + \om, x(t + \om)) - p(t, x(t))\| \to 0$ as $t \to \infty$.
\end{proof}

\begin{definition} \label{D4} Let  $y_{k} : [0, \infty)  \to X $, $k \in \mathbb{N}$, be regulated functions.
The sequence $(y_{k})_{k}$ is said to be \emph{uniformly S-asymptotically $\om$--periodic} if for every $\ve > 0$ there exists
$T_{\ve} > 0$ such that
\[
\|y_{k}(t + \om) - y_{k}(t) \| \leqslant \ve
\]
for all $t \geqslant T_{\ve}$ and $k \in \mathbb{N}$.
\end{definition}

{In what follows, until the end of this section, when we write that a sequence $(y_k)_{k}\subset G([0,\infty),\rn)$ converges to $y\in G([0,\infty),\rn)$ for the Fr\'echet topology on $[0,\infty)$, we mean that $(y_k)_{k}$ is uniformly convergent to $y$ on bounded subsets of $[0,\infty)$. For a detailed discussion about the Fr\'echet topology on the space of regulated functions on unbounded intervals, we refer the reader to  \cite{DO}.}

\begin{lemma} \label{L4} Let $(y_{k})_{k}$ be a bounded uniformly S--asymptotically $\om$--periodic sequence in $ S_{\om}(X)$.
Assume that $(y_{k})_{k}$ converges to $y$ {for the Fr\'echet topology} on $[0, \infty)$. Then $y \in S_{\om}(X)$.
\end{lemma}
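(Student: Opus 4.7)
The plan is to show first that $y$ lies in $G_b([0,\infty),X)$, and then use a three-term splitting together with the pointwise consequence of Fréchet convergence to conclude the S-asymptotic $\omega$-periodicity.

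For the regularity and boundedness of $y$, I would observe that Fréchet convergence means $(y_k)_k$ converges uniformly to $y$ on every compact subinterval $[0,a]$. Since each $y_k$ restricted to $[0,a]$ is a regulated function and uniform limits of regulated functions are regulated (this follows, for instance, from the characterization of $G([0,a],X)$ as the uniform closure of step functions), $y$ is regulated on every $[0,a]$, hence on $[0,\infty)$. Boundedness of $y$ is inherited from the uniform boundedness of the sequence $(y_k)_k$ via pointwise convergence: if $\sup_k\|y_k\|_\infty\leqslant M$, then $\|y(t)\|\leqslant M$ for all $t\geqslant0$ by passing to the limit.

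For the S-asymptotic $\omega$-periodicity, fix $\varepsilon>0$. By Definition~\ref{D4}, the hypothesis that $(y_k)_k$ is uniformly S-asymptotically $\omega$-periodic supplies $T_\varepsilon>0$ such that
\[
\|y_k(t+\omega)-y_k(t)\|\leqslant\varepsilon/3
\]
for every $t\geqslant T_\varepsilon$ and every $k\in\mathbb{N}$. Now I fix an arbitrary $t\geqslant T_\varepsilon$ and decompose
\[
y(t+\omega)-y(t)=\bigl[y(t+\omega)-y_k(t+\omega)\bigr]+\bigl[y_k(t+\omega)-y_k(t)\bigr]+\bigl[y_k(t)-y(t)\bigr].
\]
Since both $t$ and $t+\omega$ belong to the bounded interval $[0,t+\omega]$, Fréchet convergence yields an index $k=k(t)$ large enough so that $\|y(t+\omega)-y_k(t+\omega)\|<\varepsilon/3$ and $\|y_k(t)-y(t)\|<\varepsilon/3$. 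Combining with the middle term bounded by $\varepsilon/3$, we obtain $\|y(t+\omega)-y(t)\|\leqslant\varepsilon$. As $t\geqslant T_\varepsilon$ was arbitrary, this proves $y(t+\omega)-y(t)\to0$ as $t\to\infty$, so $y\in S_\omega(X)$.

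The proof is mostly routine; the only point that requires attention is that Fréchet convergence gives uniform convergence only on bounded sets, so one cannot choose a single index $k$ that works for all $t$ simultaneously. The three-term splitting circumvents this by allowing $k$ to depend on $t$ for the two ``convergence'' terms, while the middle term is controlled uniformly in $k$ by the uniform S-asymptotic $\omega$-periodicity of the sequence. This dependence of $k$ on $t$ is precisely why Definition~\ref{D4} (uniform in $k$) is the right hypothesis.
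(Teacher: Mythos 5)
Your proof is correct and follows essentially the same route as the paper: the same three-term splitting with an $\varepsilon/3$ argument, exploiting that the middle term is controlled uniformly in $k$ while the index $k$ for the two convergence terms is allowed to depend on $t$. The only addition is your explicit verification that $y$ is bounded and regulated, which the paper leaves implicit.
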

\begin{proof} Let $\ve > 0$. It follows from our hypotheses that there exists $T_{\ve} > 0$ such that
\[
\|y_{k}(t + \om) - y_{k}(t) \| \leqslant \ve/3
\]
for all $t \geqslant T_{\ve}$ and $k \in \mathbb{N}$. In addition, for each $t \geqslant T_{\ve}$ there exists $k_1 \in \mathbb{N}$ such that
$\|y(s) - y_{k}(s) \| \leqslant \ve/3$ for all $k \geqslant k_1$ and for all $s \in [0, t +\om]$. Combining these estimates, we have
\begin{eqnarray*}
\|y(t + \om) - y(t) \| & \leqslant & \|y(t + \om) -y_{k}(t + \om)\| + \|y_{k}(t + \om) - y_{k}(t) \| + \|y_{k}(t) - y(t) \| \\
& \leqslant & \ve
\end{eqnarray*}
for all $t \geqslant T_{\ve}$. This implies that $y \in S_{\om}(X)$.
\end{proof}
\begin{theorem} \label{thm3.3} Assume that the hypotheses  of Theorem~\ref{thm3.2} hold. Assume in further that
the following conditions hold:
\begin{itemize}
\item[(a)] The function $h$ is uniformly S--asymptotically $\om$--periodic on bounded sets.
\item[(b)] For every $N > 0$, the function $\mu(\cdot, N)$ given by $\mu(t, N) = \int_{0}^{t} M(s, N) {\rm d}  g(s)$, $ t \geqslant 0$,
is S--asymptotically $\om$--periodic.
\end{itemize}
Then problem HMDE \eqref{hmde1}--\eqref{hmde2} on $[0, \infty)$ has an S--asymptotically $\om$--periodic solution.
\end{theorem}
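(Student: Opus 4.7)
The plan is to apply the Krasnoselskii--type fixed-point Theorem~\ref{FPT} in the Banach space $X = S_{\om}(\rn)$ equipped with the norm of uniform convergence. First, using the hypothesis \eqref{equ3.3} of Theorem~\ref{thm3.2}, I would choose $N > 0$ large enough so that
\[
\|x_0 - h(0, x_0)\| + \sup_{t \geq 0,\, \|u\| \leq N}\|h(t,u)\| + \int_0^\infty M(s, N)\,{\rm d}g(s) \leq N,
\]
and set $S = \{x \in S_{\om}(\rn) : x(0) = x_0,\ \|x\|_\infty \leq N\}$, a nonempty, convex, bounded, closed subset of $X$. With the operators $A$ and $B$ defined by \eqref{A} and \eqref{B}, any fixed point of $A + B$ in $S$ will be an $S$-asymptotically $\om$-periodic solution of \eqref{hmde1}--\eqref{hmde2} on $[0, \infty)$.

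The operator $A$ satisfies hypothesis (a) of Theorem~\ref{FPT}: it is a nonlinear $\mathfrak{D}$-contraction with contracting function $\vp(\cdot, N)$ by (A3*), and $A(S) \subset S_{\om}(\rn)$ thanks to Lemma~\ref{L2}, which is applicable because of hypothesis (a) of the statement. The core technical step is to verify that $B \colon S \to X$ is completely continuous. For any $x \in S$ and $t_1 < t_2$, (A2*) yields
\[
\|Bx(t_2) - Bx(t_1)\| \leq \int_{t_1}^{t_2} M(s, N)\,{\rm d}g(s) = \mu(t_2, N) - \mu(t_1, N).
\]
Hypothesis (b) forces $\mu(\cdot, N)$ to be bounded on $[0, \infty)$, hence $\int_0^\infty M(s, N)\,{\rm d}g < \infty$ and each $Bx$ is Cauchy at infinity, which in particular gives $Bx \in S_{\om}(\rn)$; moreover the same inequality supplies the uniform tail estimate
\[
\sup_{x \in S}\ \sup_{t_2 > t_1 \geq T} \|Bx(t_2) - Bx(t_1)\| \leq \int_{T}^{\infty} M(s, N)\,{\rm d}g(s) \longrightarrow 0 \quad \text{as } T \to \infty.
\]
Combined with the local equiregularity of $B(S)$ on each $[0, k]$ (via the regulated function $p(t) = \int_0^t M(s, N)\,{\rm d}g$ and Lemma~\ref{L1}, exactly as in the proof of Theorem~\ref{exist1}), a diagonal extraction gives subsequences of $(B x_n)$ that converge uniformly on every bounded interval, and the uniform tail estimate upgrades this to uniform convergence on $[0, \infty)$; thus $B(S)$ is relatively compact in $X$. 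Continuity of $B$ then follows from the pointwise convergence $Bx_n(t) \to Bx(t)$ (produced by (A0), Lemma~\ref{L6} and Theorem~\ref{DCT}) together with the standard subsequence argument.

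Hypothesis (c) of Theorem~\ref{FPT} is routine: for $u, v \in S$ one has $(Au + Bv)(0) = h(0, u(0)) + x_0 - h(0, x_0) = x_0$ since $u(0) = x_0$, and $\|Au + Bv\|_\infty \leq N$ by the choice of $N$; hence $Au + Bv \in S$. Theorem~\ref{FPT} then delivers a fixed point $x \in S \subset S_{\om}(\rn)$, which is the desired solution. The principal obstacle in the argument is the relative compactness step, because $S_{\om}(\rn)$ does not itself admit a convenient Arzel\`a--Ascoli-type characterization of its compact subsets (typical $S$-asymptotically $\om$-periodic functions oscillate at infinity). The key insight is that the integrable majorant $M(\cdot, N)$ forces every element of $B(S)$ to possess a limit at infinity uniformly in $x \in S$; that is, $B(S)$ lies in the much smaller closed subspace of regulated functions with uniformly Cauchy tails, on which the local Arzel\`a--Ascoli criterion (Lemma~\ref{L1}) together with the tail estimate does give compactness. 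Hypothesis (b) provides exactly the uniform tail control needed to make this reduction work.
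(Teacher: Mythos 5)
Your argument is correct, but it follows a genuinely different route from the paper's. The paper does not apply Theorem~\ref{FPT} in the sup norm at all: it works in $G_{b}([0,\infty),\rn)$ equipped with the Fr\'echet topology of uniform convergence on bounded intervals, builds the resolvent $(I-A)^{-1}$ via the Boyd--Wong theorem, proves that $(I-A)^{-1}B$ is completely continuous for that weaker topology (complete continuity of $B$ on each $[0,j]$ plus a diagonal extraction, with Lemma~\ref{L4} used to show that the limit of a uniformly S--asymptotically $\om$--periodic sequence stays in $S_{\om}(\rn)$), and then invokes the Schauder--Tychonoff theorem on $W=S_{\om}(\rn)\cap V_{N}$. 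The reason for all this machinery is that the authors only exploit the tail information in the weak form \eqref{equ4.3}, namely $\int_{t}^{t+\om}M(s,N)\,{\rm d}g\to 0$; they never upgrade compact-open precompactness of $B(V_N)$ to sup-norm precompactness. Your key observation --- that \eqref{equ3.7} (equivalently, hypothesis (b), since a bounded nondecreasing $\mu(\cdot,N)$ converges at infinity) forces $\int_{0}^{\infty}M(s,N)\,{\rm d}g<\infty$, hence the \emph{uniform} tail estimate $\sup_{x\in S}\|Bx(t_2)-Bx(t_1)\|\leqslant\int_{T}^{\infty}M(s,N)\,{\rm d}g\to0$ --- is valid (it follows from Lemma~\ref{L6} together with the monotonicity of $M(s,\cdot)$ and of $\mu(\cdot,N)$), and it legitimately converts local Arzel\`a--Ascoli compactness (Lemma~\ref{L1}) into relative compactness of $B(S)$ in the Banach space $(S_{\om}(\rn),\|\cdot\|_{\infty})$. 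This lets you reuse Theorem~\ref{FPT} verbatim, dispensing with the Fr\'echet topology, the resolvent $(I-A)^{-1}$, Lemma~\ref{L4}, and Schauder--Tychonoff; condition (c) of Theorem~\ref{FPT} absorbs the paper's Step (v) automatically. What the paper's route buys in exchange is robustness: the Fr\'echet/Tychonoff argument would survive in settings where the majorant is not globally integrable and only the increment condition \eqref{equ4.3} is available, whereas your argument is tied to the finiteness of $\int_{0}^{\infty}M(s,N)\,{\rm d}g$ --- which, however, is already built into \eqref{equ3.3} and \eqref{equ3.7} here, so nothing is lost for this theorem. The only points worth tightening in a final write-up are (i) an explicit check that the compact-open limit $y$ of a subsequence inherits the tail estimate, so that convergence is genuinely uniform on $[0,\infty)$, and (ii) the remark that $Ax\in S_{\om}(\rn)$ requires boundedness and regulatedness of $t\mapsto h(t,x(t))$, which follow from the modified (A3*) as in Remark~\ref{R1}; both are routine.
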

\begin{proof} We consider the maps $A$ and $B$ given by \eqref{A} and \eqref{B}, respectively.
Using \eqref{equ3.3}, we select  $N > 0$ large enough such that
\begin{equation}
\|x_{0} - h(0, x_{0}) \| + \sup_{0 \leqslant t < \infty, \|u\| \leqslant N} \|h(t, u)\| + \int_{0}^{\infty} M(s, N) {\rm d} g(s) \leqslant N. \label{equ3.7}
\end{equation}
We divide the proof in five parts.

{\bf Step (i).} Let $V_{N} = \{x \in G([0, \infty), \rn): x(0)=x_0, \; \|x\|_{\infty} \leqslant N\}$. {It is clear that $V_{N}$ is a
bounded convex closed subset of $G_{b}([0, \infty), \rn)$}. Arguing as in \cite[Theorem~2.3]{DHAGE},
we will show that $(I-A)^{-1}B : V_{N} \to V_{N}$ is well defined.
At first, we observe that for each $y \in V_{N}$ there exists at most one $x \in V_{N}$ such that
$x = Ax + By$. In fact, if we assume that $x_{1}, x_{2}$ satisfy the condition with $x_{1}(t) \neq x_{2}(t)$ for some
$t > 0$,  then $x_{1} - x_{2} = A x_{1} - A x_{2}$. This implies that
\[
\|x_{1}(t) - x_{2}(t)\| = \|A x_{1}(t) - A x_{2}(t)\| \leqslant \vp(\|x_{1}(t) - x_{2}(t)\|) < \|x_{1}(t) - x_{2}(t)\|
\]
which is a contradiction. In addition, for a fixed $y \in V_{N}$, we define the map $F: V_{N} \to
G([0, \infty), \rn)$ by $F x = A x + B y$. It follows from \eqref{equ3.7} that $\|Fx(t)\| \leqslant N$ for all $t \geqslant 0$. Consequently,
we can affirm that $F: V_{N} \to V_{N}$. {Since $F$ is a nonlinear $\mathfrak{D}$--contraction,  applying \cite[Theorem~1]{BW}, we infer that $F$ has a unique fixed point $x$ in
$V_{N}$}. It is clear that $x = (I-A)^{-1}B y$.

{\bf Step (ii).} In what follows, and unless we specify somewhat different, we will consider the space
$G_{b}([0, \infty), \rn)$ {provided with the Fr\'echet topology}, and the subsets of $G_{b}([0, \infty), \rn)$ provided with the topology
induced by $G_{b}([0, \infty), \rn)$.
We affirm that the map  $(I-A)^{-1} : B(V_{N}) \to V_{N}$ {is continuous for the Fr\'echet topology}. In fact, assume that
the sequence $(B y_{j})_{j}$ converges  uniformly on bounded intervals. Let $x_{j} = A x_{j} + B y_{j}$ and $ a > 0$.
If we assume that $(x_{j})_{j}$ is not a Cauchy sequence, we derive the existence of $\ve > 0$, $t_{k} \in [0, a]$,
$m_{k}, j_{k} \to \infty$ such that $\|x_{j_{k}}(t_{k}) - x_{m_{k}}(t_{k})\| \geqslant \ve$. In what follows, we
will show that this statement is not possible.   There exists $\delta > 0$ such that $\vp(\al) \leqslant \al - \delta$ for
$\ve \leqslant \al \leqslant 2 N$.
Since $\|x_{j}(t)\| \leqslant N$ {for all $t \geqslant 0$}, we can apply the previous assertion with $\al =
\|x_{j_{k}}(t_{k}) - x_{m_{k}}(t_{k})\|$.
Therefore,
\begin{eqnarray*}
\|x_{j_{k}}(t_{k}) - x_{m_{k}}(t_{k})\| & \leqslant & \|A x_{j_{k}}(t_{k}) - A x_{m_{k}}(t_{k})\| + \|B y_{j_{k}}(t_{k}) - B y_{m_{k}}(t_{k})\|  \\
& \leqslant & \vp(\|x_{j_{k}}(t_{k}) - x_{m_{k}}(t_{k})\|)  + \|B y_{j_{k}}(t_{k}) - B y_{m_{k}}(t_{k})\|  \\
& \leqslant & \|x_{j_{k}}(t_{k}) - x_{m_{k}}(t_{k})\| - \delta  + \|B y_{j_{k}}(t_{k}) - B y_{m_{k}}(t_{k})\|.
\end{eqnarray*}
Hence, we have
\[
\delta  \leqslant \|B y_{j_{k}}(t_{k}) - B y_{m_{k}}(t_{k})\| \to 0, \; k \to \infty,
\]
which is a contradiction, {and this proves the} statement.

{\bf Step (iii).} Let $x(\cdot) \in S_{\om}(\rn) \cap V_{N}$. We will show that
$A x, B x \in S_{\om}(\rn)$. It follows from Lemma~\ref{L2} that $t \mapsto h(t, x(t))$ is an
S-asymptotically $\om$-periodic function. In addition, let ${\displaystyle v(t) = \int_{0}^{t} f(s, x(s)) {\rm d} g(s)}$.
Then
\[
v(t + \om) - v(t) =  \int_{t}^{t + \om} f(s, x(s)) {\rm d} g(s),
\]
which implies that
\begin{equation}
\|v(t + \om) - v(t)\| =  \left\| \int_{t}^{t + \om} f(s, x(s)) {\rm d} g(s) \right\| \leqslant \int_{t}^{t + \om} M(s, N) {\rm d} g(s) \to 0, \;
t \to \infty. \label{equ4.3}
\end{equation}

{\bf Step (iv).}  In this step, we will show that $B \colon S_{\om}(\rn) \cap V_{N} \to S_{\om}(\rn) \cap V_{N}$
{is completely continuous for the Fr\'echet topology}.
First, let $(x_{k})_{k}$ be a sequence that converges to $x$ uniformly on bounded intervals. Applying Theorem~\ref{DCT}
to the sequence $f(s, x_{k}(s)) - f(s, x(s))$, we conclude that $B x_{k} (t) \to B x(t) $ as $k \to \infty$ uniformly on intervals of type
$[0, a]$ for all $a > 0$. This shows that $B$ is continuous for the {Fr\'echet topology}.
Let $(x_{k})_{k}$ be a sequence in $S_{\om}(\rn) \cap V_{N}$.
Proceeding as in the proof of Theorem~\ref{exist1}, {we can affirm that $B \colon G([0, a], \rn) \to G([0, a], \rn)$ is completely
continuous for all $a > 0$. Consequently},
there exists a subsequence  $(x_{k}^{1})_{k}$ of $(x_{k})_{k}$ such that $(B x_{k}^{1})_{k}$ converges uniformly to $y^{1}$
in the interval $[0, 1]$. Repeating inductively this argument, we can affirm that there exists a subsequence  $(x_{k}^{j+1})_{k}$
of $(x_{k}^{j})_{k}$  such that $(B x_{k}^{j+1})_{k}$ converges uniformly to $y^{j+1}$ in the interval $[0, j+1]$.
It is clear that $y^{j+1}(t) = y^{j}(t)$ for $t \in [0, j]$. This allows us to define $y(t) = y^{j}(t)$ for $t \in [0, j]$.
In addition, using a diagonal selection process, we know that there exists a sequence $(B x_{k_{j}}^{j})_{j}$
that converges to $y$ uniformly on compact intervals. {Repeating the estimate \eqref{equ4.3}}, we deduce that
the sequence $(B x_{k_{j}}^{j})_{j}$ is uniformly S--asymptotically $\om$--periodic. Applying Lemma~\ref{L4}, we can affirm that
$y \in S_{\om}(\rn) \cap V_{N}$.

{\bf Step (v).} We define  $W = \{x \in S_{\om}(\rn): x(0) = x_{0}, \; \|x\|_{\infty} \leqslant N\}$ endowed with the {Fr\'echet topology}.
For $y \in W$, let $x  \in V_{N}$ such that $x = A x + By$. This implies that
\begin{eqnarray*}
\lefteqn{x(t + \om) - x(t)  =  A x(t + \om) - A x(t) + B y(t + \om) - B y(t)} \\
& = & h(t +\om,  x(t + \om)) - h(t, x(t)) + B y(t + \om) - B y(t) \\
& = & h(t +\om,  x(t + \om)) - h(t,  x(t + \om)) + h(t,  x(t + \om)) - h(t, x(t))  + B y(t + \om) - B y(t)
\end{eqnarray*}
which yields that
\begin{eqnarray}
\lefteqn{ \|x(t + \om) - x(t)\| \leqslant  \|h(t +\om,  x(t + \om)) - h(t,  x(t + \om))\| + \|h(t,  x(t + \om)) - h(t, x(t))\|} \nonumber \\
& & +  \|B y(t + \om) - B y(t)\| \nonumber \\
& \leqslant & \|h(t +\om,  x(t + \om)) - h(t,  x(t + \om))\| + \vp(\|x(t + \om) -  x(t)\|,N)
 + \|B y(t + \om) - B y(t)\|. \label{equ3.8}
\end{eqnarray}
Next, we analyze each term on the right hand { of \eqref{equ3.8}}. The first term converges to zero when $t$ goes to infinity, because
the function $h$  is  uniformly S--asymptotically $\om$--periodic on bounded sets. The term
$\|B y(t + \om) - B y(t)\| \to 0$ as $t \to \infty$ by the assertion in Step (iii). Arguing as in Step (ii), from
\eqref{equ3.8} we deduce that $\|x(t + \om) - x(t)\| \to 0$ as $t \to \infty$. {Hence, $x \in W$ and we can define
the map  $U : W \to W$  given by  $U y = (I -A)^{-1} B y$.
Combining steps (ii) and (iv) we derive that $U$ is a completely continuous map. Moreover, we can show that $\overline{U(W)} \subseteq W$. In fact,
for $y \in \overline{U(W)}$ we can select a sequence $(x_{k})_{k}$ in $W$ such that $U(x_{k}) \to y$ as $k \to \infty$. It follows from Step (iv) that there exists a subsequence of $(x_{k})_{k}$, that we will continue to denote with the same index, such that $Bx_{k} \to z$ as $k \to \infty$ with $z \in W$.
Applying again Step (ii), we obtain that $U(x_{k}) = (I - A)^{-1} Bx_{k} \to (I - A)^{-1} z \in V_{N}$ as $k \to \infty$. This implies that
$y =  (I - A)^{-1} z$ and $y = Ay + z$. Arguing as before to get the estimates \eqref{equ3.8}, we infer that $y \in S_{\om}(\rn)$, which
in turn implies that $y \in W$. Applying} Schauder--Tychonoff's Theorem
\cite[Theorem~II.7.1.13]{GD}, we obtain that $U$ has a fixed point $x$ in $W$. Therefore, $x$ is a fixed point of $A + B$ and
$x(\cdot) \in S_{\om}(\rn)$ is a solution of problem HMDE \eqref{hmde1}--\eqref{hmde2} on $[0, \infty)$.
\end{proof}

\section{Continuous dependence on parameters}
In this section, we present a continuous dependence result on parameters for the problem HMDE \eqref{hmde1}--\eqref{hmde2} 
on $J = [t_{0}, t_{0} +a]$ for functions $h, f : J \times \rn \to \rn$ and $g: J \to \real$. We assume that $f$
satisfies conditions (A0)-(A1) and that $g$ is a nondecreasing left--continuous function. To simplify the presentation, we will
restrict ourselves to consider the type of
conditions considered in Theorem~\ref{exist1}.{ Hence, in  order to present our statement,
let $h_{k}, f_{k} : J \times \rn \to \rn$, $k \in \mathbb{N}$, be  functions which satisfy  conditions
(A0)--(A3) with respect to functions $\vp_{k}:\real^{+}\to\real^{+}$, $M_{k}: J \to\real^{+}$, for $k \in \mathbb{N}$, respectively. We denote by $A_{k}$ and $B_{k}$ the operators
associated to functions $h_{k}, f_{k}$ by mean of expressions \eqref{A} and \eqref{B}, respectively. We assume that
$\vp_{k}$ are $\mathfrak{D}$--functions that satisfy the condition $\vp_{k}(t) < t$ uniformly on bounded intervals.
To specify this requirement, we introduce the following condition:
\begin{description}
\item[(I)] For every $0 < c < d$, there is $\delta > 0$ such that $\vp_{k}(t) < t - \delta$ for all $t \in [c, d]$ and $k \in \mathbb{N}$.
\end{description}
As a consequence, since the hypotheses involved in the statement of Theorem~\ref{exist1} are fulfilled, we can affirm that the following problem}
\begin{eqnarray}
x_{k}(t) & = &  x_k(t_0)  - h_{k}(t_{0}, x_{k}(t_0)) + h_{k}(t, x_{k}(t)) + \displaystyle \int_{t_{0}}^{t} f_{k}(s,x_{k}(s)) {\rm d} g(s), \; \; t\in J, \label{hmde5} \vspace{2mm}\\
x_k (t_0) & = &  \widehat{x}_{k} \label{hmde6}
\end{eqnarray}
 has a solution $x_{k}(\cdot)$ for $k \in \mathbb{N}$.

We are in a position to present the following result of continuous dependence on parameters.

\begin{theorem} \label{thm4.1} Under the above conditions,   assume further that condition  (I) holds and that  the
following conditions are fulfilled:
\begin{itemize}
\item[(i)] The sequence $\widehat{x}_{k} \to x_{0}$ as $k \to \infty$.
\item[(ii)] The sequence  $h_{k}(t, u) \to h(t, u)$ as $k \to \infty$ uniformly for $t \in J$ and $u$ in bounded subsets of $\rn$.
\item[(iii)] The sequence $f_{k}(t, u) \to f(t, u)$ as $k \to \infty$  for $t \in J$ and uniformly for $u$ in bounded subsets of $\rn$, {the sequence of  functions $(M_k)_k$ converges pointwise on $J$ to a function $M \colon J\to\rn$},
and there exists a constant $C > 0$ having the following property: for every subdivision $d = \{\sigma_{0}, \ldots, \sigma_{l} \}$
of the interval $J$ and every finite subset $\{m_{1}, \ldots, m_{l} \}$ of $\mathbb{N}$ the inequality
$\sum_{j = 1}^{l} \int_{\sigma_{j-1}}^{\sigma_{j}} M_{m_{j}}(s) {\rm d} g \leqslant  C$ holds.
\end{itemize}
If there exists $R > 0$ such that
\begin{equation}
\liminf_{k \to \infty} \sup_{r \geqslant R} \frac{\vp_{k}(r)}{r}  <  1, \label{equ5.1}
\end{equation}
then there is a subsequence of $(x_{k})_{k}$ that converges uniformly on $J$ to a solution  $x(\cdot)$ of problem
HMDE \eqref{hmde1}--\eqref{hmde2}.
\end{theorem}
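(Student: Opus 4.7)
The plan is to adapt the compactness strategy from the proof of Theorem~\ref{exist1}: uniformly bound $\|x_k\|_\infty$, extract a relatively compact subsequence in $G(J,\rn)$ via Lemma~\ref{L1}, and pass to the limit in the integral equation using the dominated convergence Theorem~\ref{DCT}. It is convenient to split each solution as
\[
x_k(t) = h_k(t, x_k(t)) + y_k(t), \quad y_k(t) := \widehat{x}_k - h_k(t_0, \widehat{x}_k) + \int_{t_0}^t f_k(s, x_k(s)) {\rm d}g(s),
\]
so that the $h$--part absorbs the nonlinear contraction and the $y$--part carries the Kurzweil--Stieltjes integral.

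First I would establish the uniform bound. From \eqref{equ5.1} one passes to a subsequence (still written $(x_k)_k$) along which $\sup_{r \geqslant R} \vp_k(r)/r \leqslant \theta$ for a fixed $\theta < 1$. The trivial subdivision in hypothesis (iii) gives $\int_{t_0}^{t_0+a} M_k\,{\rm d}g \leqslant C$, while (i)--(ii) make $\|\widehat{x}_k - h_k(t_0, \widehat{x}_k)\|$ and $\sup_{t\in J}\|h_k(t,0)\|$ uniformly bounded. Estimating $\|h_k(t, x_k(t))\| \leqslant \|h_k(t,0)\| + \vp_k(\|x_k(t)\|)$ via (A3) and splitting on whether $\|x_k(t)\| < R$ or $\|x_k(t)\| \geqslant R$ yields a uniform constant $N$ with $\|x_k\|_\infty \leqslant N$.

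The decisive step, and the one I expect to be the main obstacle, is showing that $(y_k)_k$ is \emph{equiregulated} on $J$. Because the $M_k$'s vary with $k$, the bound $\|u_k(t_2)-u_k(t_1)\|\leqslant\int_{t_1}^{t_2}M_k\,{\rm d}g$ from Lemma~\ref{L6}, where $u_k(t) = y_k(t) - y_k(t_0)$, does not yield equiregularity directly through Theorem~\ref{thm1.2}. I would argue by contradiction. Suppose left--equiregularity fails at some $\bar t \in J$: then there exist $\ve > 0$, $t_n \uparrow \bar t$ and, after ruling out a constant $k$--subsequence by the regulatedness of each $u_k$, indices $k_n \to \infty$ with $\|u_{k_n}(t_n) - u_{k_n}(\bar t^-)\| \geqslant \ve$. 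Choose $s_n \in (t_n, \bar t)$ close enough to $\bar t$ that $\|u_{k_n}(s_n) - u_{k_n}(\bar t^-)\| < \ve/2$; the triangle inequality gives $\int_{t_n}^{s_n} M_{k_n}\,{\rm d}g \geqslant \ve/2$. Passing to a further subsequence makes the intervals $[t_n, s_n]$ pairwise disjoint. Plugging any $N_0$ of them into a subdivision of $J$ and assigning tag $m_j = k_n$ on $[t_n, s_n]$ produces
\[
\sum_j \int_{\sigma_{j-1}}^{\sigma_j} M_{m_j}(s)\,{\rm d}g(s) \geqslant \frac{\ve N_0}{2},
\]
contradicting the uniform bound $C$ in~(iii) once $N_0 > 2C/\ve$. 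A symmetric argument handles the right--limit case, so $(y_k)_k$ is equiregulated on $J$. Being also uniformly bounded, Lemma~\ref{L1} produces a further subsequence $y_{k_j} \to y$ uniformly on $J$.

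It remains to pull this convergence back to $(x_{k_j})$ and to pass to the limit in \eqref{hmde5}. Setting $e_{j,l}(t) := \|x_{k_j}(t) - x_{k_l}(t)\|$ and using (A3) applied to $h_{k_j}$, we obtain
\[
e_{j,l}(t) \leqslant \vp_{k_j}(e_{j,l}(t)) + \|y_{k_j} - y_{k_l}\|_\infty + \sup_{t\in J,\,\|v\|\leqslant N}\|h_{k_j}(t,v) - h_{k_l}(t,v)\|.
\]
Given any $c > 0$, hypothesis (I) applied on the compact interval $[c, 2N]$ yields $\delta > 0$ with $r - \vp_k(r) \geqslant \delta$ for all $r$ there and all $k$; once the two error terms fall below $\delta$---which happens for $j,l$ large by the uniform convergence of $y_{k_j}$ and by (ii)---the estimate forces $e_{j,l}(t) < c$ uniformly in $t$. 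Hence $(x_{k_j})$ is uniformly Cauchy and converges uniformly to a regulated $x$ with $\|x\|_\infty \leqslant N$. For the limit passage, (A2) applied to each $f_{k_j}$ combined with (iii) supplies the subdivision bound required by Theorem~\ref{DCT} for $s \mapsto f_{k_j}(s, x_{k_j}(s))$; the pointwise limit equals $f(s, x(s))$ since (iii) gives $f_{k_j}(s, x_{k_j}(s)) - f(s, x_{k_j}(s)) \to 0$ uniformly in $s$ for $x_{k_j}(s)$ bounded, while $f(s, \cdot)$ is continuous as a uniform-on-bounded-sets limit of the continuous $f_k(s, \cdot)$. For the $h$--terms in \eqref{hmde5} one uses (ii) together with the Lipschitz--$1$ bound $\|h(t,u) - h(t,v)\| \leqslant \|u-v\|$ inherited in the limit from the $\vp_k$--contractions, and (i). Combining everything, one passes to the limit in \eqref{hmde5} to conclude that $x$ solves \eqref{hmde1}--\eqref{hmde2} on $J$.
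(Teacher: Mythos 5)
Your proposal is correct, and while the overall skeleton (uniform bound, compactness, passage to the limit via Theorem~\ref{DCT}) matches the paper's, the two technically decisive steps are handled by a genuinely different route. The paper establishes equiregularity of the whole family $\{x_k\}$ directly: it first asserts that $\{B_k x_k\}$ is equiregulated by invoking Theorem~\ref{thm1.2} for the one-sided limits of each $p_k(t)=\int_{t_0}^{t}f_k(s,x_k(s))\,{\rm d}g(s)$, and then runs the condition~(I) contradiction on the decomposition $x_k=A_kx_k+B_kx_k$ before applying Lemma~\ref{L1} to $\{x_k\}$ itself. Your treatment of the integral part is more careful on the point where the paper is terse: since the dominating functions $M_k$ vary with $k$, the jump formula for each fixed $k$ does not by itself give uniformity in $k$, whereas your contradiction argument --- extracting pairwise disjoint intervals $[t_n,s_n]$ each carrying mass $\int_{t_n}^{s_n}M_{k_n}\,{\rm d}g\geqslant\ve/2$ and packing $N_0>2C/\ve$ of them into a single subdivision --- uses the uniform bound $C$ of hypothesis~(iii) in exactly the way it is designed for, and closes that gap. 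Having compactness of $(y_k)=(B_kx_k)$ in hand, you then recover uniform convergence of $(x_{k_j})$ by showing it is uniformly Cauchy through the $\vp_{k}$-contraction estimate and condition~(I), rather than by a second application of Arzel\`a--Ascoli; this is the same ``uniform gap $\delta$'' mechanism the paper uses, just deployed at a different place. Your version of the uniform bound (passing first to a subsequence with $\sup_{r\geqslant R}\vp_k(r)/r\leqslant\theta<1$ and splitting on $\|x_k(t)\|\gtrless R$) is also cleaner than the paper's $\liminf$ manipulation and is legitimate since only a subsequence is claimed. The one item you pass over lightly is the paper's Step~2 (equiregularity of $\{h_k(\cdot,v):v\in K,\,k\}$), whose role is to guarantee that the limit function $t\mapsto h(t,x(t))$ is regulated as required by the definition of solution; in your setting this follows from the inherited Lipschitz-$1$ bound on $h(t,\cdot)$ together with Remark~\ref{R1}-type reasoning, so it is a cosmetic omission rather than a gap.
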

\begin{proof} We separate the proof in four steps.

{\bf Step 1.} We show that the set of functions $\{x_{k}: k \in \mathbb{N} \}$ is bounded on $J$. First, we observe that
$h_{k}(t_{0}, \widehat{x}_{k}) \to h(t_{0}, x_{0})$ as $k \to \infty$. In fact,
\begin{eqnarray*}
\|h_{k}(t_{0}, \widehat{x}_{k}) -  h(t_{0}, x_{0}) \| & \leqslant & \|h_{k}(t_{0}, \widehat{x}_{k}) -  h_{k}(t_{0}, x_{0}) \| +
\|h_{k}(t_{0}, x_{0}) -  h(t_{0}, x_{0}) \| \\
& \leqslant & \vp_{k}(\|\widehat{x}_{k}  -   x_{0} \|) + \|h_{k}(t_{0}, x_{0}) -  h(t_{0}, x_{0}) \| \\
& < & \|\widehat{x}_{k}  -   x_{0} \| + \|h_{k}(t_{0}, x_{0}) -  h(t_{0}, x_{0}) \| \to 0, \; k \to \infty.
\end{eqnarray*}
Moreover, from \eqref{hmde5}--\eqref{hmde6}, we have
\begin{eqnarray*}
\|x_{k}(t)\| & \leqslant & \|\widehat{x}_{k}  - h_{k}(t_{0}, \widehat{x}_{k})\| + \|h_{k}(t, x_{k}(t)) - h_{k}(t, 0) \|+ \|h_{k}(t, 0) \| +
\int_{t_{0}}^{t_{0} +a} M_{k}(s) {\rm d} g(s) \\
& \leqslant &   \|\widehat{x}_{k}  - h(t_{0}, x_{0})\|  +   \|h(t_{0}, x_{0}) - h_{k}(t_{0}, \widehat{x}_{k})\| + \| h_{k}(t, 0) \|+
\vp_{k}(\|x_{k}(t)\|) + \int_{t_{0}}^{t_{0} +a} M_{k}(s) {\rm d} g(s) \\
& \leqslant & C_{1} + \vp_{k}(\|x_{k}(t)\|)
\end{eqnarray*}
for certain constant $C_{1} \geqslant 0$ and all $t \in J$. Let ${\displaystyle N_{k} = \sup_{t \in J} \|x_{k}(t)\|}$. It follows from the
above estimates that
\begin{equation}
1 \leqslant \frac{C_{1}}{N_{k}} + \frac{\vp_{k}(\|x_{k}(t)\|)}{N_{k}} \leqslant \frac{C_{1}}{N_{k}} + \frac{\vp_{k}(N_{k})}{N_{k}} \label{equ5.2}
\end{equation}
for all $k \in \mathbb{N}$. Assume that the set $\{N_{k} : k \in \mathbb{N} \}$ is unbounded. It follows from the estimates \eqref{equ5.2}
that ${\displaystyle \liminf_{k \to \infty} \frac{\vp_{k}(N_{k})}{N_{k}} \geqslant 1}$.
On the other hand, it follows from \eqref{equ5.1} that
\[
\liminf_{k \to \infty} \frac{\vp_{k}(N_{k})}{N_{k}} \leqslant \liminf_{k \to \infty} \sup_{r \geqslant R} \frac{\vp_{k}(r)}{r} < 1
\]
which contradicts our previous assertion. Hence,  we get that there is a constant $N > 0$ independent of $k$ such that
$\|x_{k}(t)\| \leqslant N$ for all $t \in J$.

{\bf Step 2.}  For each $v \in \rn$ the function $h(\cdot, v)$ is  regulated because it is the uniform limit
of the sequence of regulated functions $h_{k}(\cdot, v)  $ as $k \to \infty$. For the same reason, the set
$\{h_{k}(\cdot, v):  k \in \mathbb{N} \}$ is equiregulated on $J$. In addition,
for a bounded set $K \subset \rn$,  the set of functions $\{h_{k}(\cdot, v): v \in K, \; k \in \mathbb{N} \}$ is equiregulated on $J$.
In order to prove that the left and right limits of the functions $h_{k}(\cdot, v)$ exist uniformly for
$v \in K$ and $k \in \mathbb{N}$, it suffices to show that we can approximate the functions $h_{k}(\cdot, v)$ by functions taken
from an equiregulated set of functions. Specifically, for every $\ve > 0$,  since $K$ is a relatively compact set in $\rn$,  there
exist $v_{i} \in \rn$ for $i = 1, \ldots, m$ such that
for each $v \in K$ there is $i = 1, \ldots, m$ with $\|v - v_{i}\| \leqslant \ve$. By our previous remark, the set
$\{h_{k}(\cdot, v_{i}):  k \in \mathbb{N}, \; i =1, \ldots, m \}$ is equiregulated on $J$. Moreover, for $v \in K$, by choosing
$i = 1, \ldots, m$ such that $\|v - v_{i}\| \leqslant \ve$,  we have
\[
\|h_{k}(t, v) - h_{k}(t, v_{i}) \| \leqslant \vp_{k}(\|v - v_{i}\|) \leqslant \ve
\]
for all $  k \in \mathbb{N}$ and $t \in J$. This shows that $\{h_{k}(\cdot, v): v \in K, \; k \in \mathbb{N} \}$ is equiregulated on $J$.

{\bf Step 3.} The set of functions $\{x_{k}(\cdot):  k \in \mathbb{N} \}$ is equiregulated on $J$.
It follows from \eqref{hmde5} that
\[
x_{k}(t) = A_{k} x_{k}(t) + B_{k} x_{k}(t).
\]
First we prove that the set $\{B_{k} x_{k} : k \in \mathbb{N} \}$ is equiregulated on $J$. Let
\[
p_{k}(t) = \int_{t_{0}}^{t} f_{k}(s,x_{k}(s)) {\rm d} g(s), \; t \in J.
\]
It follows from (iii) that the set $\{p_{k}(t) : k \in \mathbb{N}, t \in J \}$ is bounded. Moreover,
for $t_{0} \leqslant t < t_{0} + a$, from \eqref{B} and applying Theorem~\ref{thm1.2}, we have that
{\[
B_{k} x_{k}(t^{+}) - B_{k} x_{k}(t) = f_{k}(t,x_k(t)) \Delta^{+}g(t)
\]}
which shows that $\{B_{k} x_{k} : k \in \mathbb{N} \}$ is right equiregulated at $t$. In similar way, we can show that
$\{B_{k} x_{k} : k \in \mathbb{N} \}$ is left equiregulated at $t \in (t_{0}, t_{0} + a]$.

On the other hand, for $t_{1}, t_{2} \in J$,  we can write
\[
x_{k}(t_{2}) - x_{k}(t_{1})= A_{k} x_{k}(t_{2}) - A_{k} x_{k}(t_{1}) + B_{k} x_{k}(t_{2}) - B_{k} x_{k}(t_{1})
\]
from which we have
\begin{eqnarray}
\|x_{k}(t_{2}) - x_{k}(t_{1})\| & \leqslant &  \|A_{k} x_{k}(t_{2}) - A_{k} x_{k}(t_{1})\| + \|B_{k} x_{k}(t_{2}) - B_{k} x_{k}(t_{1})\|
\nonumber \\
& \leqslant & \vp_{k}(\|x_{k}(t_{2}) - x_{k}(t_{1})\| + \|B_{k} x_{k}(t_{2}) - B_{k} x_{k}(t_{1})\|. \label{equ4.1}
\end{eqnarray}
Assume  that $\{x_{k}(\cdot):  k \in \mathbb{N} \}$ is not equiregulated. Specifically, to fix ideas, suppose that condition
(i) from Definition~\ref{D3}  is not verified at point $\tau_{0}$. Then we can affirm that there exists $\ve > 0$  and two sequences
$(t_{1}^{k})_{k}$ and $(t_{2}^{k})_{k}$ that converge to $\tau_{0}^{-}$ and $\|x_{k}(t_{2}^{k}) - x_{k}(t_{1}^{k})\| \geqslant \ve$ for all
$k \in \mathbb{N}$. Using \eqref{equ4.1} and condition (I) with $\ve$ instead of $c$ and $2 N$ instead $d$, we can choose $\delta > 0$
such that
\[
\|x_{k}(t_{2}^{k}) - x_{k}(t_{1}^{k})\|  \leqslant \|x_{k}(t_{2}^{k}) - x_{k}(t_{1}^{k})\| - \delta + \|B_{k} x_{k}(t_{2}^{k}) -
B_{k} x_{k}(t_{1}^{k})\|
\]
and taking the limit above as $k \to \infty$, we derive that $\delta = 0$, which is a contradiction.

{\bf Step 4.}  From Step 1, Step 3, and Lemma~\ref{L1}, we infer that the set
$\{x_{k}(\cdot):  k \in \mathbb{N} \}$ is relatively compact in the space $G(J, \rn)$.
Consequently,  $(x_{k})_{k \in \mathbb{N}}$ contains a subsequence which is uniformly convergent on $J$.
Without loss of generality, we can denote this subsequence again by $(x_{k})_{k\in\mathbb{N}}$.
Let $x = \lim_{k \to \infty} x_{k}$. In particular,
\[
x(t_{0}) = \lim_{k \to \infty} x_{k}(t_{0}) = \lim_{k \to \infty}\widehat {x}_{k}={x}_{0}.
\]
Moreover, we point out that $h_{k}(t, x_{k}(t)) \to h(t, x(t))$ as $k \to \infty$ uniformly on $J$. In fact,
\begin{eqnarray*}
\|h_{k}(t, x_{k}(t)) - h(t, x(t)) \| & \leqslant & \|h_{k}(t, x_{k}(t)) - h_{k}(t, x(t)) \| + \|h_{k}(t, x(t)) - h(t, x(t)) \|  \\
& < & \| x_{k}(t) -  x(t) \| + \|h_{k}(t, x(t)) - h(t, x(t)) \|
\end{eqnarray*}
and the assertion is consequence of {hypothesis} (ii).
In addition, ${\int_{t_{0}}^{t} f_{k}(s, x_{k}(s)) {\rm d} g \to \int_{t_{0}}^{t} f(s, x(s)) {\rm d} g}$ as $k \to \infty$.
Indeed, in the first place, we can see that
$f_{k}(s, x_{k}(s)) \to f(s, x(s))$ as $k \to \infty$  for $s \in J$. As a matter of fact,
gathering the hypothesis (iii) with the continuity of the function $ f(t, \cdot)$, we obtain that
\[
\|f_{k}(t, x_{k}(t)) - f(t, x(t)) \|  \leqslant  \|f_{k}(t, x_{k}(t)) - f(t, x_{k}(t)) \| + \|f(t, x_{k}(t)) - f(t, x(t)) \|
\to 0, \; k \to \infty.
\]
Furthermore, let  $d = \{\sigma_{0}, \ldots, \sigma_{l} \}$ be a subdivision
of the interval $[t_{0}, t_{0} + a]$ and let $\{m_{1}, \ldots, m_{l} \}$ be a finite subset of $\mathbb{N}$. Then
applying our hypothesis (iii), we have
\[
\left\| \sum_{j = 1}^{l} \int_{\sigma_{j -1}}^{\sigma_{j}} f_{m_{j}}(s, x_{m_{j}}(s)) {\rm d} g(s) \right\| \leqslant \sum_{j = 1}^{l}
\int_{\sigma_{j -1}}^{\sigma_{j}} M_{m_{j}}(s) {\rm d} g(s) < C.
\]
Thus, from Theorem~\ref{DCT}, we obtain the assertion.

Finally, collecting the properties established above, taking limit in \eqref{hmde5} as $k \to \infty$, we obtain
\[
x(t) = x_{0}   - h(t_{0}, x_{0}) + h(t, x(t)) + \int_{t_{0}}^{t} f(s, x(s)) {\rm d} g(s), \; \; t\in J,
\]
which shows that $x(\cdot)$ is a solution of problem HMDE \eqref{hmde1}--\eqref{hmde2}.
\end{proof}

\bigskip
\noindent {\bf Acknowledgements}

We dedicate this work to the memory of our dear co-author Hern\'an R. Henr\'iquez who passed away  on Thursday, June 2, 2022.

C. A. Gallegos is supported by ANID/FONDECYT postdoctorado No 3220147 ; H. R. Henr\'{\i}quez  was partially supported
by Vicerrector\'{\i}a de Investigaci\'on, Desarrollo e Innovaci\'on from Universidad de Santiago de Chile under Grant  DICYT-USACH 041733HM; J. G. Mesquita was partially supported by CNPq grant 307582/2018-3.

\end{document}